\newtheorem{mytheorem}[subsection]{Theorem}
\newtheorem{mylemma}[subsection]{Lemma}
\newtheorem{mydefinition}[subsection]{Definition}
\newtheorem{myremark}[subsection]{Remark}
\newtheorem{myexample}[subsection]{Example}
\newcommand\testshape{family=\f@family; series=\f@series; shape=\f@shape.}
\def\myemphInternal#1{\if n\f@shape%
\begingroup\itshape #1\endgroup\/%
\else\begingroup\sf\itshape #1\endgroup%
\fi}
\def\myemph{\futurelet\testchar\MaybeOptArgmyemph}
\def\MaybeOptArgmyemph{\ifx[\testchar \let\next\OptArgmyemph
                 \else \let\next\NoOptArgmyemph \fi \next}
\def\OptArgmyemph[#1]#2{\index{#1}\myemphInternal{#2}}
\def\NoOptArgmyemph#1{\myemphInternal{#1}}
\newcommand\id{\mathrm{id}}          % identity map    
\newcommand\Int{\mathrm{Int}}        % interior 
\newcommand\Fix[1]{\mathrm{Fix}(#1)} % fixed points
\newcommand\rank{\mathrm{rank}}      % rank
\newcommand\bR{\mathbb{R}}
\newcommand\bZ{\mathbb{Z}}
\newcommand\bN{\mathbb{N}}
\newcommand\Aut{\mathrm{Aut}}       % automorphisms
\newcommand\Diff{\mathcal{D}}       % diffeomorphisms
\newcommand\Homeo{\mathcal{H}}      % homeomorphisms
\newcommand\Map{\mathrm{Map}}       % maps
\newcommand\Orb{\mathcal{O}}        % orbit
\newcommand\Stab{\mathcal{S}}       % stabilizer
\newcommand\DiffId{\Diff_{\id}}     % diffeomorphisms
\newcommand\StabId{\Stab_{\id}}     % stabilizer
\newcommand\Cinfty{\mathcal{C}^{\infty}}
\newcommand\Ci[2]{\mathcal{C}^{\infty}(#1,#2)}               % space of C^\infty maps
\newcommand\Morse[2]{\mathcal{M}(#1,#2)}                     % space of Morse maps
\newcommand\MorseSmp[2]{\mathcal{M}^{smp}(#1,#2)}            % space of simple Morse maps
\newcommand\MorseGen[2]{\mathcal{M}^{gen}(#1,#2)}            % space of generic Morse maps
\newcommand\Stabilizer[1]{\Stab(#1)}
\newcommand\StabilizerId[1]{\StabId(#1)}
\newcommand\StabilizerIsotId[1]{\Stab'(#1)}
\newcommand\Orbit[1]{\Orb(#1)}
\newcommand\OrbitComp[2]{\Orb_{#1}(#2)}
\newcommand\FolStabilizer[1]{\Delta(#1)}
\newcommand\FolStabilizerIsotId[1]{\Delta'(#1)}
\newcommand\SO{\mathrm{SO}}
\newcommand\UnitGroup{\{1\}}
\newcommand\Bman{B}
\newcommand\Kman{K}
\newcommand\Mman{M}
\newcommand\Pman{P}
\newcommand\Rman{R}
\newcommand\Xman{X}
\newcommand\Yman{Y}
\newcommand\Zman{Z}
\newcommand\DiffM{\Diff(\Mman)}
\newcommand\DiffIdM{\DiffId(\Mman)}
\newcommand\func{f}
\newcommand\gfunc{g}
\newcommand\dif{h}
\newcommand\Grp{\mathbf{G}}
\newcommand\Grpf[1]{\Grp(#1)}
\newcommand\Gf{\Grpf{\func}}
\newcommand\Ga{\Grpf{\alpha}}
\newcommand\Gb{\Grpf{\beta}}
\newcommand\AdmisWords{L^{*}}
\newcommand\PClassAll{\mathcal{P}}
\newcommand\PClassTwo{\PClassAll_2}
\newcommand\ClassGf{\mathbf{G}}
\newcommand\ClassGfM{\ClassGf(\Mman,\Pman)}
\newcommand\ClassGfSmpM{\ClassGf^{smp}(\Mman,\Pman)}
\newcommand\ClassGfGenM{\ClassGf^{gen}(\Mman,\Pman)}
\newcommand\KRGraph[1]{\Gamma_{#1}}
\newcommand\KRGraphf{\KRGraph{\func}}
\newcommand\word{W}
\newcommand\aword{A}
\newcommand\bword{B}
\newcommand\len{\mathrm{len}}
\newcommand\RR{\mathcal{R}}
\newcommand\GG{\mathcal{G}}
\newcommand\bSet{X}
\newcommand\bGroup{G}
\newcommand\actHom{\delta}
\newcommand\aGroup{H}
\newcommand\hh{h}
\newcommand\myqed{}
\begin{document}

\title[Automorphisms of Kronrod-Reeb graphs]{Automorphisms of Kronrod-Reeb graphs of Morse functions on compact surfaces}
%\title{Automorphisms of Kronrod-Reeb graphs of Morse functions on compact surfaces}

\author{Anna Kravchenko}
\email{annakravchenko1606@gmail.com}
\address{Taras Shevchenko National University of Kyiv, Ukraine}

\author{Sergiy Maksymenko}
\email{maks@imath.kiev.ua}
\address{Institute of Mathematics, National Academy of Sciences of Ukraine, Kyiv, Ukraine}

\keywords{Morse function, Kronrod-Reeb graph, wreath product}

\subjclass[2010]{
20E22,     % Extensions, wreath products, and other composition
57M60,     % Group actions in low dimensions
22F50     % Groups as automorphisms of other structures
}

\begin{abstract}
Let $M$ be a connected orientable compact surface, $f:M\to\mathbb{R}$ be a Morse function, and $\mathcal{D}_{\mathrm{id}}(M)$ be the group of difeomorphisms of $M$ isotopic to the identity.
Denote by $\mathcal{S}'(f)=\{f\circ h = f\mid h\in\mathcal{D}_{\mathrm{id}}(M)\}$ the subgroup of $\mathcal{D}_{\mathrm{id}}(M)$ consisting of difeomorphisms ``preserving'' $f$, i.e. the stabilizer of $f$ with respect to the right action of $\mathcal{D}_{\mathrm{id}}(M)$ on the space $\mathcal{C}^{\infty}(M,\mathbb{R})$ of smooth functions on $M$.
Let also $\mathbf{G}(f)$ be the group of automorphisms of the Kronrod-Reeb graph of $f$ induced by diffeomorphisms belonging to $\mathcal{S}'(f)$.
This group is an important ingredient in determining the homotopy type of the orbit of $f$ with respect to the above action of $\mathcal{D}_{\mathrm{id}}(M)$ and it is trivial if $f$ is ``generic'', i.e. has at most one critical point at each level set $f^{-1}(c)$, $c\in\mathbb{R}$.

For the case when $M$ is distinct from $2$-sphere and $2$-torus we present a precise description of the family $\mathbf{G}(M,\mathbb{R})$ of isomorphism classes of groups $\mathbf{G}(f)$, where $f$ runs over all Morse functions on $M$, and of its subfamily $\mathbf{G}^{smp}(M,\mathbb{R}) \subset \mathbf{G}(M,\mathbb{R})$ consisting of groups corresponding to simple Morse functions, i.e. functions having at most one critical point at each connected component of each level set.

In fact, $\mathbf{G}(M,\mathbb{R})$, (resp. $\mathbf{G}^{smp}(M,\mathbb{R})$), coincides with the minimal family of isomorphism classes of groups containing the trivial group and closed with respect to direct products and also with respect to wreath products ``from the top'' with arbitrary finite cyclic groups, (resp. with group $\mathbb{Z}_2$ only).
\end{abstract}

\maketitle

\section{Introduction}\label{sect:introduction}

Let $\Mman$ be a smooth ($\Cinfty$) compact two-dimensional manifold, $\Pman$ be either the real line $\bR$ or the circle $S^1$, and $\DiffM$ be the group of $\Cinfty$ diffeomorphisms of $\Mman$.
Then there is a natural right action $\eta:\Ci{\Mman}{\Pman}\times\DiffM\to \Ci{\Mman}{\Pman}$ of this group on the space of smooth maps $\Mman\to\Pman$ defined by the formula $\eta(\func,\dif) = \func \circ \dif$.
For $\func \in \Ci{\Mman}{\Pman}$ let
\begin{align*}
\Stabilizer{\func} &=\{\dif\in\DiffM \mid \func \circ \dif = \func\}, &
\Orbit{\func} &=\{\func \circ \dif \mid \dif \in \DiffM\},
\end{align*}
be respectively its \myemph{stabilizer} and \myemph{orbit}.

We will endow all spaces of smooth maps, in particular $\DiffM$, $\Ci{\Mman}{\Pman}$, and all their subspaces, with the corresponding $\Cinfty$ topologies.
Denote by $\StabilizerId{\func}$, (respectively $\DiffIdM$), the path component of the identity map $\id_{\Mman}$ in $\Stabilizer{\func}$ (respectively, in $\DiffM$), and by $\OrbitComp{\func}{\func}$ the path component of $\func$ in $\Orbit{\func}$.
Let also
\[
\StabilizerIsotId{\func} = \Stabilizer{\func} \cap \DiffIdM,
\]
be the \myemph{stabilizer of $\func$ with respect to the induced action of $\DiffIdM$}, so it consists of diffeomophisms preserving $\func$ and isotopic to the identity, though the isotopy should not necessarily preserve $\func$.

Let $\func:\Mman\to\Pman$ be a smooth map, and $\Kman$ be a connected component of some \myemph{level set} $\func^{-1}(c)$ of $\func$, where $c \in\Pman$.
We will call $\Kman$ \myemph{regular}, whenever it does not contain critical points of $\func$.
Otherwise $\Kman$ will be called \myemph{critical}.

\begin{mydefinition}\label{def:MorseFunc}
A smooth map $f \in \Ci{\Mman}{\Pman}$ is called \myemph{Morse} if the following conditions are satisfied:
\begin{enumerate}[leftmargin=*, label={\rm(\arabic*)}]
\item $\func$ takes constant values on the connected components of the boundary $\partial\Mman$;
\item each critical point $\func$ is nondegenerate and is contained in $\Int{\Mman}$.
\end{enumerate}

A Morse map is
\begin{itemize}
\item \myemph{simple}, if every critical connected component of every critical level set of $\func$ contains a unique critical point;
\item \myemph{generic}, if each level set $\func$ contains no more than one critical point.
\end{itemize}
\end{mydefinition}
Figure~\ref{fig:morse_funcs} illustrates differences between simple and generic Morse maps.
For instance, each critical component $\Kman$ of each level set of a simple Morse map is either a point or a ``figure eight''.

\begin{figure}[h]
\centerline{
\begin{tabular}{ccccc}
\includegraphics[height=2cm]{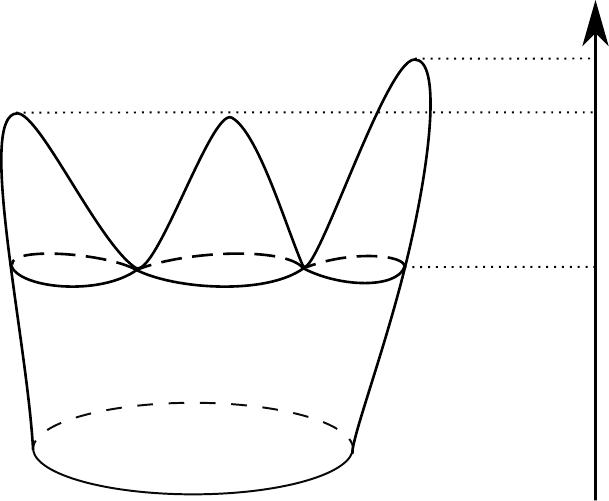} & \quad\qquad &
\includegraphics[height=2cm]{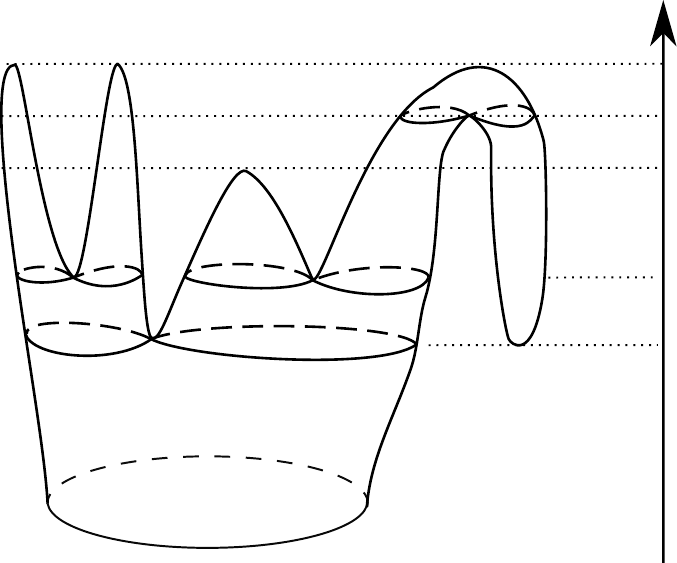} & \quad\qquad &
\includegraphics[height=2cm]{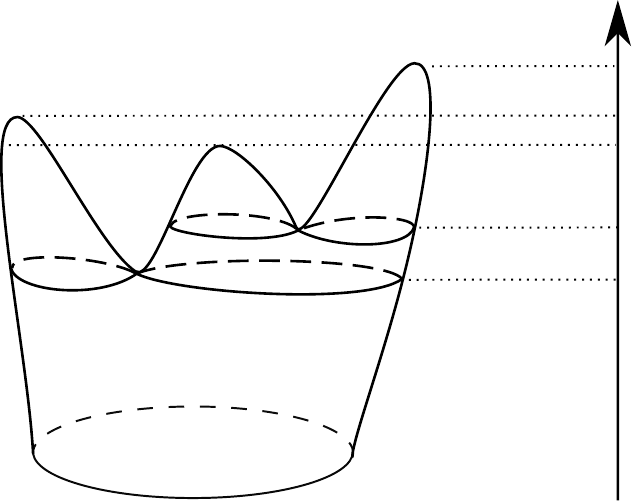} \\[2mm]
a) Morse function & &  b) simple Morse function & &  c) generic Morse function
\end{tabular}
}
\caption{}\label{fig:morse_funcs}
\end{figure}

The space of Morse maps $\Mman\to\Pman$ will be denoted by $\Morse{\Mman}{\Pman}$, and its subspaces consisting of simple, resp.\! generic, Morse maps will be denoted by $\MorseSmp{\Mman}{\Pman}$ and $\MorseGen{\Mman}{\Pman}$ respectively.
Obviously, the following inclusions hold:
\[
\MorseGen{\Mman}{\Pman} \ \subset \
\MorseSmp{\Mman}{\Pman} \ \subset \
\Morse{\Mman}{\Pman}.
\]

Notice that $\MorseGen{\Mman}{\Pman}$ is open and everywhere dense in the subset of $\Cinfty(\Mman,\Pman)$ taking constant values at boundary components of $\Mman$.

By~\cite{Sergeraert:ASENS:1972}, see also~\cite[Theorem~11.7]{Maksymenko:AGAG:2006}, for every $\func\in\Morse{\Mman}{\Pman}$ the map 
\begin{align}\label{equ:p:D_O_loc_triv_fibr}
p:\DiffM\to\Orbit{\func} &&
p(\dif) &= \eta(\dif,\func) = \func\circ\dif
\end{align}
is a locally trivial principal $\Stabilizer{\func}$-fibration.

In~\cite[Theorem~3.7]{Maksymenko:OsakaJM:2011} second author shown that for a wide class of maps on surfaces which contains all Morse maps the group $\StabilizerId{\func}$ is either contractible or homotopy equivalent to the circle, see also~\cite{Maksymenko:ProcIM:ENG:2010}, \cite{Maksymenko:UMZ:ENG:2012}.
He also proved in~\cite[Theorems 1.5, 1.6]{Maksymenko:AGAG:2006} that if $\func$ has at least one saddle point, then $\StabilizerId{\func}$ is contractible, $\pi_n \OrbitComp{\func}{\func} = \pi_n \Mman$ for $n\geq3$, $\pi_2 \OrbitComp{\func}{\func} = 0$, and for $\pi_1 \OrbitComp{\func}{\func}$ we have the following commutative diagram with exact rows:
\begin{equation}\label{equ:exact_seq_for_pi1Of}
\xymatrix{
1 \ar[r] & 
\pi_1\DiffIdM \ar[r] \ar@{^(->}[d]&
\pi_1\OrbitComp{\func}{\func} \ar[r]^{\partial} \ar@{=}[d] & 
\pi_0\StabilizerIsotId{\func} \ar[r] \ar[d]^{\rho} & 
1 \\
1 \ar[r]& 
\pi_1\DiffIdM  \oplus \bZ^k \ar[r] &
\pi_1\OrbitComp{\func}{\func} \ar[r]^{\lambda} & 
\Gf \ar[r] & 
1
}
\end{equation}
for some $k\geq0$ and a finite group $\Gf$ being \myemph{the group of automorphisms of the Kronrod-Reeb (or Lyapunov) graph $\KRGraphf$ of $\func$ induced by diffeomorphisms from $\StabilizerIsotId{\func}$}, see~\eqref{equ:Gf} below.
The upper row in~\eqref{equ:exact_seq_for_pi1Of} comes from the exact sequence of homotopy groups of the fibration~\eqref{equ:p:D_O_loc_triv_fibr}.
Moreover, if $\func$ is \myemph{generic}, then the group $\Gf$ is trivial (see Lemma~\ref{lm:f=sG} below), and $\OrbitComp{\func}{\func}$ is homotopy equivalent to one of the spaces
\begin{itemize}[label=$\bullet$]
\item $(S^1)^m=\underbrace{S^1\times\cdots\times S^1}_{m}$ if $\Mman\not=S^{2}, \bR{P}^2$;
\item $S^2$, if $\Mman=S^2$ and $\func$ have only two critical points maximum and minimum;
\item $\SO(3) \times (S^1)^m$, in all other cases,
\end{itemize}
for some $m\geq0$ which depends on $\func$.

The homotopy types of connected components of the spaces of Morse functions on compact spaces are calculated by E.~Kudryavtseva~\cite{Kudryavtseva:SpecMF:VMU:2012}, \cite{Kudryavtseva:MathNotes:2012}, \cite{Kudryavtseva:MatSb:2013}.
In those papers she also extended the above result to the case when $\Gf$ is non-trivial:
\begin{mytheorem}[\cite{Maksymenko:AGAG:2006}, \cite{Kudryavtseva:SpecMF:VMU:2012}, \cite{Kudryavtseva:MathNotes:2012}, \cite{Kudryavtseva:MatSb:2013}]
\label{th:hom_type_Of}
Let $\Mman$ be an orientable compact surface and $\func\in\Morse{\Mman}{\bR}$ be such that $\chi(\Mman) < |\Fix{\StabilizerIsotId{\func}}|$.
The last condition holds if $\chi(\Mman)<0$, or if $\func$ is generic and has at least one saddle critical point.
Then there exists a free action of the group $\Gf$ by isometries on an $m$-dimensional torus $(S^1)^m$ with flat metric, where $m=\rank(\pi_1\DiffId(\Mman) \ \oplus \ \bZ^k)$, such that $\OrbitComp{\func}{\func}$ is homotopy equivalent to the quotient space
\begin{itemize}[label=$\bullet$]
\item $(S^1)^m/\Gf$, if $\Mman\not=S^2$, and
\item $\SO(3) \ \times \ (S^1)^m/\Gf$, if $\Mman=S^2$.
\myqed
\end{itemize}
\end{mytheorem}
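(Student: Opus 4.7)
My approach has three stages: first compute the higher homotopy groups of $\OrbitComp{\func}{\func}$ to reduce the claim to a statement about its fundamental group (times an $\SO(3)$ factor in the spherical case), then realize the extension in the bottom row of~\eqref{equ:exact_seq_for_pi1Of} as the fundamental group of a quotient of a flat torus by a free isometric action, and finally match the two spaces by Whitehead's theorem.

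\textbf{Step 1: Higher homotopy.} I would start from the fibration~\eqref{equ:p:D_O_loc_triv_fibr}, restricted to the relevant path components, and feed it into the long exact sequence of homotopy groups. Under the standing hypothesis $\chi(\Mman)<|\Fix{\StabilizerIsotId{\func}}|$, results cited from~\cite{Maksymenko:AGAG:2006} and~\cite{Maksymenko:OsakaJM:2011} give that $\StabilizerId{\func}$ is contractible; thus for $n\geq 2$ the map $p$ induces isomorphisms $\pi_n\DiffIdM \cong \pi_n\OrbitComp{\func}{\func}$, and the quoted formula $\pi_n\OrbitComp{\func}{\func}=\pi_n\Mman$ for $n\geq 3$ together with $\pi_2\OrbitComp{\func}{\func}=0$ follows from the known homotopy types of $\DiffIdM$ for compact orientable surfaces. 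For $\Mman\neq S^2$ the surface is aspherical, so $\OrbitComp{\func}{\func}$ is a $K(\pi,1)$; for $\Mman=S^2$ one has $\DiffIdM\simeq\SO(3)$ and the $\SO(3)$-factor in the target can be split off via the evaluation fibration, reducing to the aspherical situation on the remaining factor.

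\textbf{Step 2: Building the flat model.} The abelian group $A:=\pi_1\DiffIdM\oplus\bZ^k$ is torsion-free of rank $m$, so I would identify it with $\bZ^m$ and form the real envelope $A\otimes\bR\cong\bR^m$, endowed with the standard flat metric. The lower exact row of~\eqref{equ:exact_seq_for_pi1Of} exhibits $\pi_1\OrbitComp{\func}{\func}$ as an extension of the finite group $\Gf$ by $\bZ^m$; conjugation in this extension produces a representation $\Gf\to\Aut(\bZ^m)$, and after averaging the standard inner product over $\Gf$ I obtain an equivalent lattice on which $\Gf$ acts by isometries. The extension $\pi_1\OrbitComp{\func}{\func}$ then embeds as a discrete cocompact group of isometries of $\bR^m$ (a crystallographic group), and the quotient manifold is tautologically $(S^1)^m/\Gf$ with the induced free isometric action on the flat torus.

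\textbf{Step 3: Freeness and identification.} The delicate point—and I expect the main obstacle—is showing that the $\Gf$-action on $(S^1)^m$ is genuinely free, equivalently that the extension in~\eqref{equ:exact_seq_for_pi1Of} is torsion-free. Here I would argue that any element of finite order in $\pi_1\OrbitComp{\func}{\func}$ would, via the map $\lambda$, project nontrivially to $\Gf$, and then analyze lifts of such elements through $\rho$: a lift is represented by a diffeomorphism $\dif\in\StabilizerIsotId{\func}$, and its class in $\pi_0\StabilizerIsotId{\func}$ together with the structure of $\ker\rho$ (which, by the diagram, corresponds to Dehn twists along regular components and to loops coming from $\pi_1\DiffIdM$) must be reconciled with the requirement that the resulting element in $\pi_1\OrbitComp{\func}{\func}$ be a torsion element. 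Using that Dehn twists along disjoint essential circles generate a free abelian subgroup and that the hypothesis $\chi(\Mman)<|\Fix{\StabilizerIsotId{\func}}|$ forces enough saddle structure to guarantee such circles exist, I would obtain a contradiction unless the element is trivial. Once torsion-freeness is established, Bieberbach's theorem guarantees the quotient $(S^1)^m/\Gf$ is an aspherical manifold with fundamental group $\pi_1\OrbitComp{\func}{\func}$, and Whitehead's theorem identifies it with $\OrbitComp{\func}{\func}$ (resp.\ with the non-$\SO(3)$ factor when $\Mman=S^2$), completing the proof.
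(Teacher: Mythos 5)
The paper does not prove this theorem at all: it is imported verbatim from \cite{Maksymenko:AGAG:2006} and the papers of Kudryavtseva, so there is no internal argument to compare yours against. Judged on its own terms, your proposal has a reasonable architecture --- reduce to asphericity via the fibration \eqref{equ:p:D_O_loc_triv_fibr} and the contractibility of $\StabilizerId{\func}$, build a crystallographic model of the extension in the bottom row of \eqref{equ:exact_seq_for_pi1Of}, and conclude by uniqueness of aspherical spaces with prescribed fundamental group --- but it is not a proof where it matters.

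The genuine gap is Step 3. Freeness of the $\Gf$-action on $(S^1)^m$, equivalently torsion-freeness of $\pi_1\OrbitComp{\func}{\func}$, is precisely the hard content of the cited results and is not a formal consequence of the exact sequence: an extension of a finite group by $\bZ^m$ can perfectly well have torsion. Concretely, if $g\in\pi_1\OrbitComp{\func}{\func}$ maps under $\lambda$ to an element of order $d$ in $\Gf$, you must show that $g^{d}$ is a \emph{nonzero} element of $\pi_1\DiffIdM\oplus\bZ^k$, i.e. compute the translational part of $g^{d}$ and see that it does not vanish. Your sketch only asserts that the lifts of $g$ "must be reconciled" with the structure of $\ker\rho$ and that Dehn twists along disjoint essential circles generate a free abelian subgroup; neither observation produces the required nonvanishing, which in the literature is obtained from an explicit description of $\lambda$ and $\rho$ (Kudryavtseva's special framed Morse functions, or the analysis in \cite{Maksymenko:DefFuncI:2014}). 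A secondary slip: $\pi_1\DiffIdM\oplus\bZ^k$ is \emph{not} torsion-free when $\Mman=S^2$ (it contains $\bZ_2=\pi_1\SO(3)$), so the identification $A\cong\bZ^m$ in your Step 2 is false as stated; the $\SO(3)$-splitting announced in Step 1 must actually be carried out before Step 2 can begin, and you should verify that it is compatible with the action of $\Gf$.
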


\begin{myremark}\rm
The group $\Gf$ in Theorem~\ref{th:hom_type_Of} is the \myemph{holonomy group of the compact affine manifold $(S^1)^m/\Gf$}.
Also recall that, e.g.~\cite{EarleEells:BAMS:1967}, \cite{Gramain:ASENS:1973},
\[
\pi_1\DiffId(\Mman) =
\begin{cases}
\bZ_2, & \text{$\Mman=S^2, \bR{P}^2$}, \\
\bZ\oplus\bZ, & \text{$\Mman=S^1\times S^1$ is a torus}, \\
\bZ, & \text{$\Mman=D^2, S^1\times[0,1]$, M\"obius band, or Klein bottle}, \\
0, & \text{for all other surfaces}.
\end{cases}
\]
Thus $\pi_1\DiffId(\Mman) \ \oplus \ \bZ^k$ is abelian and $m$ in Theorem~\ref{th:hom_type_Of} is its torsion-free rank.
\end{myremark}

The algebraic structure of the homomorphism $\rho:\pi_1\OrbitComp{\func}{\func}\to\Gf$ was described%
\footnote{In fact in~\cite{Maksymenko:DefFuncI:2014} this group $\Gf$ is denoted by $\mathbf{G}'(\func)$, but we removed here $'$ just to simplify the notations.}
in~\cite{Maksymenko:DefFuncI:2014} for the case when $\Mman$ is orientable and distinct from $S^2$ and $T^2 = S^1\times S^1$, (see the Lemma~\ref{lm:Gf_on_disk_cyl} below).
Furthermore, in a series of papers S.~Maksymenko and B.~Feshchenko~\cite{MaksymenkoFeshchenko:UMZ:ENG:2014}, \cite{Feshchenko:Zb:2015}, \cite{MaksymenkoFeshchenko:MFAT:2015}, \cite{MaksymenkoFeshchenko:MS:2015} computed $\pi_1\OrbitComp{\func}{\func}$ for the case $\Mman=T^2$.

Let
\begin{align*}
\ClassGfM    &= \{ \Gf \mid \func\in \Morse{\Mman}{\Pman}\}, \\
\ClassGfSmpM &= \{ \Gf \mid \func\in \MorseSmp{\Mman}{\Pman}\}, \\
\ClassGfGenM &= \{ \Gf \mid \func\in \MorseGen{\Mman}{\Pman} \}
\end{align*}
be the sets of isomorphisms classes of groups $\Gf$, where $\func$ runs over all Morse maps (respectively, simple, and generic Morse maps) from compact surface $\Mman$ to $\Pman$.

As noted above, $\ClassGfGenM = \{\UnitGroup\}$ consists of the trivial group $\UnitGroup$ only, see Lemma~\ref{lm:f=sG}.
In this paper, using the results from~\cite{Maksymenko:DefFuncI:2014}, we give an algebraic description of the sets $\ClassGfM$ and $\ClassGfSmpM$.

\begin{mytheorem}\label{th:ClassesGf}
Let $\PClassAll$ be the minimal family of isomorphism classes of groups satisfying the following conditions:
\begin{enumerate}[leftmargin=9ex, label={\rm(\alph*)}]
\item\label{enum:class:a1} $\UnitGroup\in \PClassAll$;	
\item\label{enum:class:b1} if $A, B\in \PClassAll$ and $n\in\bN$, then $A\times B$, $A\wr \bZ_{n} \, \in \, \PClassAll$.
\end{enumerate}
Let also $\PClassTwo \subset \PClassAll$ be the minimal subfamily with the property that
\begin{enumerate}[leftmargin=9ex,label={\rm(\alph*2)}]
\item\label{enum:class:a2} $\UnitGroup\in \PClassTwo$;	
\item\label{enum:class:b2} if $A, B\in \PClassTwo$, then $A\times B$, $A\wr \bZ_{2} \, \in \, \PClassTwo$.
\end{enumerate}
Then for each connected orientable compact surface $\Mman\not=S^2, T^2$ % the following sets coincide:
\begin{align}\label{equ:Classes_P_G}
\ClassGfM &= \PClassAll, &
\ClassGfSmpM &= \PClassTwo.
\end{align}
\end{mytheorem}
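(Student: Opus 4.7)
The proof splits naturally into a \emph{structural} direction $\ClassGfM\subseteq\PClassAll$ and a \emph{realization} direction $\PClassAll\subseteq\ClassGfM$, with parallel arguments for $\ClassGfSmpM$ versus $\PClassTwo$.

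For the structural direction, the plan is to induct on the complexity of the Kronrod-Reeb graph $\KRGraphf$, measured for instance by the number of its vertices. Any diffeomorphism $\dif\in\StabilizerIsotId{\func}$ induces an automorphism of $\KRGraphf$ that must carry each critical connected component of each level set of $\func$ to a combinatorially equivalent one. I would cut $\Mman$ along a suitably chosen critical component $\Kman$: this decomposes $\Mman$ into subsurfaces attached along circles of $\Kman$, on which $\func$ restricts to Morse functions with strictly simpler graphs. Those restrictions that are mutually equivalent---say $n$ of them---can be cyclically permuted by elements of $\Gf$, contributing a factor of the form $A\wr\bZ_n$ where $A\in\PClassAll$ is supplied by the inductive hypothesis, while inequivalent families contribute independent direct factors. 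Lemma~\ref{lm:Gf_on_disk_cyl} from~\cite{Maksymenko:DefFuncI:2014} gives the precise local form on the disk and cylinder pieces and makes this decomposition rigorous; the closure properties defining $\PClassAll$ then yield $\Gf\in\PClassAll$. In the simple case, each critical component is either a point or a figure-eight, so the only non-trivial cyclic permutations of attached branches are $\bZ_2$-swaps, and the same induction lands in $\PClassTwo$.

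For the realization direction, I would induct on the construction of $G\in\PClassAll$. The base case $G=\UnitGroup$ is realized by any generic Morse function (see Lemma~\ref{lm:f=sG}). Given Morse functions $\func_A,\func_B$ on a model disk realizing $A,B\in\PClassAll$, the product $A\times B$ is realized by placing disjoint embedded copies of the critical regions of $\func_A$ and $\func_B$ inside a single disk $D\subset\Mman$ and joining them through a common absolute minimum. For $A\wr\bZ_n$, take $n$ rigid rotational copies of $\func_A$ arranged with $\bZ_n$-symmetry inside $D$, joined at a central critical component carrying $n$-fold symmetry; for $n=2$ this central component can be taken to be a figure-eight, which is the only possibility in the simple case and hence yields exactly $\PClassTwo$. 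One then extends $\func$ from $D$ to all of $\Mman$ by a Morse function on $\Mman\setminus D$ whose critical values are pairwise distinct from and interleaved with those on $D$, and whose Kronrod-Reeb subgraph has no non-trivial automorphism. Since $\Mman\ne S^2,T^2$, the complement $\Mman\setminus D$ always carries enough topology to admit such an ``asymmetric filler''. The rigid rotation on $D$, extended by the identity on $\Mman\setminus D$, then gives an element of $\StabilizerIsotId{\func}$ inducing the required cyclic automorphism of $\KRGraphf$.

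The main obstacle lies in the realization step: one must guarantee that the extension to $\Mman\setminus D$ neither creates unintended automorphisms that identify pieces of $D$ with pieces of the filler, nor allows two of the $n$ copies of $\func_A$ to be identified through a path of edges running through the filler. Both issues are controlled by choosing distinct, appropriately separated critical values on $D$ and on $\Mman\setminus D$ and by choosing a combinatorially asymmetric Morse type on the filler; the fact that the theorem excludes $S^2$ and $T^2$ reflects exactly the cases where not enough topology is available to simultaneously host the construction on $D$ and break the extra symmetries. Isotopy of the constructed diffeomorphism to $\id_{\Mman}$ is automatic since it is orientation-preserving and supported on a disk.
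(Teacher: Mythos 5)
Your proposal is correct and follows essentially the same route as the paper's proof: the realization direction is the same induction on how $G\in\PClassAll$ is built up (which the paper formalizes via ``admissible words''), inserting a $\bZ_n$-symmetric arrangement of copies of an already-constructed function near the unique maximum of a generic Morse function, and the structural direction is the same cut-at-a-critical-level induction resting on Lemma~\ref{lm:Gf_on_disk_cyl}. The only substantive step you leave implicit is the preliminary reduction to $\Mman=D^2$ or $S^1\times[0,1]$ via Lemma~\ref{lm:Gf_prod_decomp}, which is needed before Lemma~\ref{lm:Gf_on_disk_cyl} can be applied to the pieces obtained by cutting a higher-genus surface (the paper then inducts on the number of saddle points, equivalent to your vertex count); note also that the exclusion of $S^2$ and $T^2$ stems from the hypotheses of these two structure lemmas rather than from any failure of the realization step.
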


Here $A \wr\bZ_{n}$ is the \myemph{wreath} product of groups $A$ and $\bZ_{n}$, see~\S\ref{sect:wreath_products}.
In particular, Theorem~\ref{th:ClassesGf} shows that $\ClassGfM$ and $\ClassGfSmpM$ do not depend on $\Mman$ and $\Pman$, whenever $\Mman$ is orientable and distinct from $S^2$ and $T^2$.

\section{Preliminaries}\label{sect:preliminaries}
\paragraph{\bf Graph of a Morse map.}
Let $\func\in\Ci{\Mman}{\Pman}$, $\KRGraphf$ be a partition of $\Mman$ into connected components of level sets of $\func$, and $p:\Mman \to \KRGraphf$ be the canonical factor-map associating to each $x \in \Mman$ the connected component of the level set $\func^{-1}(\func(x))$ containing that point.

Endow $\KRGraphf$ with the \myemph{final} topology with respect to the mapping $p$, so a subset $A\subset \KRGraphf$ is open if and only if its inverse image $p^{-1}(A)$ is open in $\Mman$.
Then $\func$ induces a continuous map $\hat{\func}:\KRGraphf \to \Pman$, such that $\func=\hat{\func}\circ p$.

It is well known that if $\func$ is Morse, then $\KRGraphf$ has a structure of a one-dimensional CW-complex, and we will call it the \myemph{graph} of $\func$.
Its vertices correspond to the critical connected components of level sets of $\func$ and connected components of the boundary of the surface.

This graph was independently introduced in the papers by G.~Adelson-Welsky and A.~Kronrod~\cite{AdelsonWelskyKronrod:DANSSSR:1945}, and G.~Reeb~\cite{Reeb:CR:1946}, and often called \myemph{Kronrod-Reeb} or \myemph{Reeb} graph of $\func$.
It is a useful tool for understanding the topological structure of smooth functions on surfaces, e.g.~\cite{Kronrod:UMN:1950}, \cite{BolsinovFomenko:ENG:1997}.
It plays as well an important role in a theory of dynamical systems on manifolds and called \myemph{Lyapunov graph} of $\func$ following J.~Frank~\cite{Franks:Top:1985}, see also~\cite{RezendeFranzosa:TrAMS:1993}, \cite{Yu:TrAMS:2013}, \cite{RezendeLedesmaManzoli-NetoVago:TA:2018}.
The reason is that for generic Morse maps $\KRGraphf$ can be embedded into $\Mman$, so that $\func$ will be monotone on its edges, cf.~\ref{enum:lm:f=sG:rho_is_onto} of Lemma~\ref{lm:f=sG} below.

Denote by $\Homeo(\KRGraphf)$ the group of homeomorphisms of $\KRGraphf$.
Then each $\dif\in\Stabilizer{\func}$ leaves invariant every level set of $\func$, i.e. $\dif(\func^{-1}(c))=\func^{-1}(c)$ for all $c\in\Pman$.
Hence it induces a homeomorphism $\rho(\dif)$ of the graph $\KRGraphf$ making commutative the following diagram:
\begin{equation}\label{equ:2x2_M_Graph}
\xymatrix{
\Mman \ar[rr]^-{p} \ar[d]_-{\dif} &&
\KRGraphf \ar[rr]^-{\hat{\func}} \ar[d]^-{\rho(\dif)} &&
\Pman \ar@{=}[d]  \\
\Mman \ar[rr]^-{p} &&
\KRGraphf \ar[rr]^-{\hat{\func}} &&
\Pman
}
\end{equation}
so that the correspondence $h\mapsto g(h)$ is a homomorphism $\rho :\Stabilizer{\func} \to \Homeo(\KRGraphf)$.

One easily checks, see Lemma~\ref{lm:f=sG} below, that the image $\rho(\Stabilizer{\func})$ is a finite subgroup of $\Homeo(\KRGraphf)$.
Thus, in a certain sense, $\rho(\Stabilizer{\func})$ encodes ``discrete symmetries of $\func$''.

It also follows from E.~Kudryavtseva and A.~Fomenko~\cite{KudryavtsevaFomenko:VMU:2013} that for every finite group $G$ there exists a Morse function $\func$ on some compact surface $\Mman$ such that $G\cong\rho(\Stabilizer{\func})$ and $\func$ takes the value $i$ at each critical point of index $i$, for $i=0,1,2$.

We will be interested in the image of the group $\StabilizerIsotId{\func}:=\Stabilizer{\func}\cap\DiffIdM$, which we denote by $\Gf$:
\begin{equation}\label{equ:Gf}
\Gf := \rho(\StabilizerIsotId{\func}).
\end{equation}

The following statement collects several properties of homomorphism $\rho$, e.g.\! in~\cite{Maksymenko:AGAG:2006}, \cite{Kudryavtseva:MathNotes:2012}, \cite{Kudryavtseva:MatSb:2013}.
We include it for the convenience of the reader and since they will be used in the proof of Theorem~\ref{th:ClassesGf}. 
\begin{mylemma}\label{lm:f=sG}
\begin{enumerate}[leftmargin=*, label={\rm\arabic*)}]
\item\label{enum:lm:f=sG:image_rho_is_finite}
The image $\rho(\Stabilizer{\func})$ is a finite subgroup of $\Homeo(\KRGraphf)$.
\item\label{enum:lm:f=sG:rho_is_onto}
There exists a continuous map $s:\KRGraphf \to \Mman$ being ``right homotopy'' inverse to the factor-map $p:\Mman\to\KRGraphf$, i.e. $p\circ s$ is homotopic to the identity map $\id_{\KRGraphf}$.
If $\func$ is simple, then $s$ can be chosen to be an embedding such that $p\circ s = \id_{\KRGraphf}$.
Hence the homomorphism $p_1:H_1(\Mman,\bZ) \to H_1(\KRGraph,\bZ)$ induced by the projection map $p$ is surjective, cf.~\cite[Corollary~3.8]{RezendeLedesmaManzoli-NetoVago:TA:2018}.

\item\label{enum:lm:f=sG:a:rho_h_id_on_H1}
Let $\dif\in\StabilizerIsotId{\func}$, so $\rho(\dif) \in\Gf$.
Homeomorphism $\rho(\dif)$ induces the identity automorphism of the first homology group $H_1(\KRGraphf,\bZ)$.
In particular, $\rho(\dif)$ leaves invariant each simple cycle of $\KRGraphf$.
\item\label{enum:lm:f=sG:b:rho_h_id_for_gener_f} 
If $\func$ is generic and $\dif\in\StabilizerIsotId{\func}$, then $\rho(\dif)$ is the identity automorphism of $\KRGraphf$.
In particular, the group $\Gf$ is trivial, whence $\ClassGfGenM=\{ \UnitGroup \}$.
\end{enumerate}
\end{mylemma}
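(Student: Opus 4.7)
The plan is to handle the four items in sequence, reducing each to a structural fact about Morse functions on surfaces together with standard facts about finite graphs.

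For item (1), any $\dif\in\Stabilizer{\func}$ permutes the finite set of critical connected components of level sets of $\func$ and the finite set of boundary components, so $\rho(\dif)$ permutes the (finite) vertex set of $\KRGraphf$. Commutativity of~\eqref{equ:2x2_M_Graph} gives $\hat{\func}\circ\rho(\dif)=\hat{\func}$, and $\hat{\func}$ is strictly monotone on each edge, so the image of each edge is pinned down up to the finitely many permutations of parallel edges between a given pair of vertices. Hence $\rho(\Stabilizer{\func})$ embeds into a finite combinatorial automorphism group of the labelled graph $\KRGraphf$.

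For item (2), I would construct $s$ edge by edge. Over an open edge $e$ the preimage $p^{-1}(e)$ is a cylinder $S^1\times e$ fibred by regular level components, on which a smooth local section is readily available; extending continuously to each vertex $v$ by choosing an arbitrary point in the critical or boundary component $p^{-1}(v)$ produces a map $s$ with $p\circ s=\id_{\KRGraphf}$, which immediately yields surjectivity of $p_*$ on $H_1$. In the simple case each critical component is a point or a figure-eight, and $s$ can additionally be required to be a topological embedding by threading it through each figure-eight so that the two adjacent edges match its two petals locally.

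Item (3) is then a formal diagram chase. Since $\dif$ is isotopic to $\id_{\Mman}$, it induces the identity on $H_1(\Mman,\bZ)$; the relation $\rho(\dif)\circ p = p\circ\dif$ combined with surjectivity of $p_*$ from (2) forces $\rho(\dif)_*=\id$ on $H_1(\KRGraphf,\bZ)$, and an automorphism of a finite graph acting trivially on $H_1$ must preserve every simple cycle setwise. For item (4), in the generic case all critical vertices of $\KRGraphf$ lie at pairwise distinct values of $\hat{\func}$, and the boundary vertices are individually preserved because any $\dif\in\DiffIdM$ fixes $\pi_0(\partial\Mman)$; together with $\hat{\func}\circ\rho(\dif)=\hat{\func}$ this forces $\rho(\dif)$ to fix every vertex. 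Each edge is then sent to an edge with the same endpoints, and (3) plus monotonicity of $\hat{\func}$ on edges forces the restriction to be the identity, giving $\rho(\dif)=\id_{\KRGraphf}$.

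The hard part I anticipate is the embedded-section construction in (2) in the simple case: passing the section smoothly through a figure-eight vertex requires a local model matching the two entering edges with the two petals, and this has to be done coherently at every such vertex. Everything else is a permutation count, a standard diagram chase, or an elementary statement about automorphisms of finite graphs.
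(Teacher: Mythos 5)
Your treatment of items (1), (3) and (4) follows essentially the paper's own route: an element of $\rho(\Stabilizer{\func})$ is determined by its permutation of the finitely many edges of $\KRGraphf$ because $\hat{\func}$ is monotone on edges and is preserved; item (3) is the same homology diagram chase using surjectivity of $p_1$ from (2) together with the fact that $\dif$, being isotopic to $\id_{\Mman}$, acts trivially on $H_1(\Mman,\bZ)$; and item (4) combines fixedness of all vertices (uniqueness of the critical component of each critical level for generic $\func$, plus invariance of boundary components under diffeomorphisms isotopic to the identity) with (1) and (3) exactly as in the paper.

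The genuine gap is in item (2), in the non-simple case. You claim that choosing a section over each open edge and then sending a vertex $v$ to an \emph{arbitrary} point of $\Kman_{v}=p^{-1}(v)$ produces a \emph{continuous} map $s$ with $p\circ s=\id_{\KRGraphf}$. Continuity at $v$ is not free: each edge $e$ incident to $v$ contributes a section whose limit on approach to $v$ is a point of $\Kman_{v}$ lying in the closure of the corresponding annular region of $\Mman$ adjacent to $\Kman_{v}$, and continuity forces all of these limits to coincide with the single chosen point $s(v)$. When $\Kman_{v}$ is a point or a figure eight this can be arranged (take $s(v)$ to be the saddle, which lies in the closure of every adjacent region) --- that is exactly the simple case, which you do flag. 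But for non-simple $\func$ the component $\Kman_{v}$ may contain several saddles, and the closures of the annular regions attached to two different incident edges need not have any point in common; then no choice of $s(v)$ makes a strict section continuous, so $p\circ s=\id_{\KRGraphf}$ cannot in general be achieved. This is precisely why the lemma, and the paper's proof, only assert $p\circ s\simeq\id_{\KRGraphf}$ in general: the paper extends each edge-section by connecting paths \emph{inside} $\Kman_{v_0}$ and $\Kman_{v_1}$ to fixed basepoints $z_{v_0},z_{v_1}$, which sacrifices the pointwise identity but keeps $p\circ s$ homotopic to the identity rel vertices. Your intended conclusion, surjectivity of $p_1$ on $H_1$, survives because a homotopy right inverse suffices; but the construction of $s$ as you describe it does not go through, and the statement $p\circ s=\id_{\KRGraphf}$ for arbitrary Morse $\func$ is stronger than what is true.
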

\begin{proof}
\ref{enum:lm:f=sG:image_rho_is_finite}
Notice that $\hat{\func}$ is \myemph{monotone} on edges of $\KRGraphf$ and $\rho(\dif)$ preserves $\hat{\func}$.
Hence if $\dif_1, \dif_2 \in \Stabilizer{\func}$ are such that the homeomorphisms $\rho(\dif_1)$ and $\rho(\dif_2)$ induce the same permutation of edges of $\KRGraphf$, then $\rho(\dif_1)=\rho(\dif_2)$.
In other words, each element of $\rho(\Stabilizer{\func})$ is determined by its permutation of edges.
Since $\KRGraphf$ has only finitely many edges, it follows that $\rho(\Stabilizer{\func})$ is finite.

\ref{enum:lm:f=sG:rho_is_onto}
For every vertex $v$ of $\KRGraphf$ let $\Kman_{v}=p^{-1}(v)$ be the corresponding critical component of some level set of $\func$ or the boundary component of $\partial\Mman$.
Fix any point $z_{v}\in\Kman_{v}$ and set $s(v) = z_{v}$.

\begin{figure}[h]
\centerline{
\begin{tabular}{ccc}
\includegraphics[height=2cm]{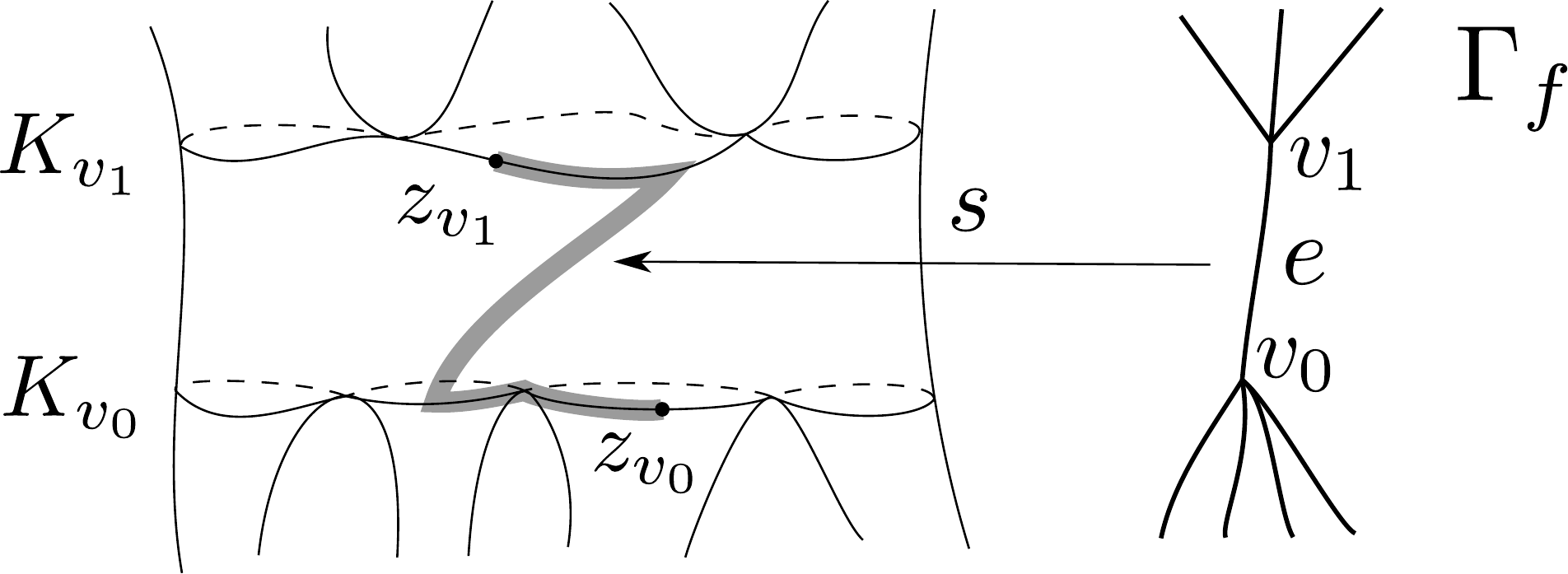} & \quad\qquad &
\includegraphics[height=2cm]{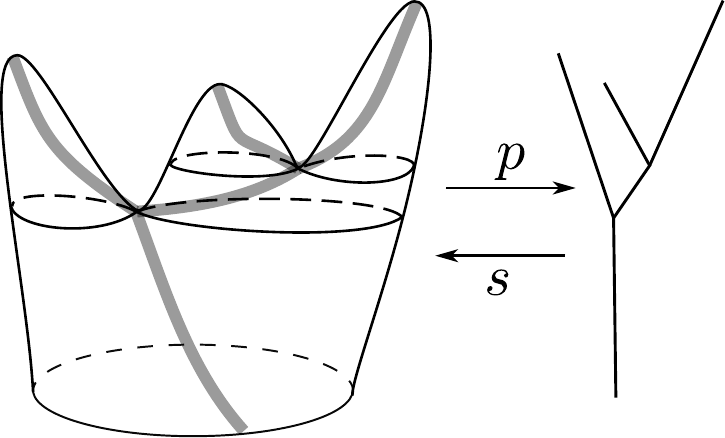} \\[2mm]
a) && b)
\end{tabular}
}
\caption{}\label{fig:section}
\end{figure}

We need to extend $s$ to the edges of $\KRGraphf$.
Let $e$ be an open edge of $\KRGraphf$ with vertices $v_0$ and $v_1$ (which may coincide if $\Pman=S^1$, so $\func$ is a circle-valued map) and $\phi_{e}:[0,1]\to\KRGraphf$ be the \myemph{characteristic} map for $e$, so $\phi_{e}(i)=v_i$, $i=0,1$, and $\phi$ homeomorphically maps $(0,1)$ onto $e$.

Fix an arbitrary embedding $\psi_{e}:[0.2, 0.8] \to\Mman$ such that $\psi_{e}(0.2) \in \Kman_{v_0}$, $\psi(0.8) \in \Kman_{v_1}$, and $\psi_{e}$ also homeomorphically maps $(0.2, 0.8)$ onto $e$.
Extend $\psi_{e}$ to the map $[0,1]\to\Mman$ so that $\psi_{e}|_{[0,0.2]}: [0,0.2] \to \Kman_{v_0}$ is a path connecting $z_{v_0}$ with $\psi(0.2)$, while $\psi_{e}|_{[0.8,1]}: [0.8,1] \to \Kman_{v_1}$ is a path connecting $\psi_{e}(0.8)$ with $z_{v_1}$.
Finally, define $s|_{e} = \psi_{e}\circ\phi_{e}^{-1}:e\to\Mman$, see Figure~\ref{fig:section}a).

Then $s:\KRGraphf\to\Mman$ is continuous, $p\circ s(v)=v$ for each vertex $v\in\KRGraphf$, and $p\circ s(\overline{e})=\overline{e}$ for each edge $e$ of $\KRGraphf$.
This implies that $p\circ s$ is homotopic to $\id_{\KRGraphf}$ relatively to the set of vertices.

Anther proof of~\ref{enum:lm:f=sG:rho_is_onto} is presented in~\cite[Corollary~3.8]{RezendeLedesmaManzoli-NetoVago:TA:2018}.

Also notice that if $\func$ is simple, and $\Kman_{v}$ is a critical component of some level set of $\func$, then one can choose $z_{v}$ to be a unique critical point belonging to $\Kman_{v}$.
Moreover, for every edge $e$ we can define $\psi_{e}:[0,1]\to\Mman$ to be an embedding.
In this case we will have that $p\circ s = \id_{\KRGraphf}$, see Figure~\ref{fig:section}b), where the image of $s$ is shown in gray.

\ref{enum:lm:f=sG:a:rho_h_id_on_H1}
Notice that for every $\dif\in\Stabilizer{\func}$ \eqref{equ:2x2_M_Graph} induces the following commutative diagram:
\begin{equation}\label{equ:H1_2x2_M_Graph}
\xymatrix{
H_1(\Mman,\bZ) \ar[r]^{p_1} \ar[d]_{\dif_1} & H_1(\KRGraphf,\bZ) \ar[d]^{\rho(\dif)_1} \\
H_1(\Mman,\bZ) \ar[r]^{p_1}               & H_1(\KRGraphf,\bZ)
}
\end{equation}
in which horizontal arrows are surjective due to~\ref{enum:lm:f=sG:rho_is_onto}.

If $\dif\in\StabilizerIsotId{\func}=\Stabilizer{\func}\cap\DiffIdM$, then $\dif_1$ is the identity, whence so is $\rho(\dif)_1$.

\ref{enum:lm:f=sG:b:rho_h_id_for_gener_f} 
Suppose $\func$ is generic.
First, we prove that \myemph{$\rho(\dif)$ fixes each vertex of $\KRGraphf$}, i.e. each of the following sets is invariant with respect to $\dif$:
\begin{itemize}
\item
critical components of $\Kman_1, \ldots,\Kman_a$ of all sets of the level $\func$, and
\item
connected components $\Bman_1, \ldots,\Bman_b$ of the boundary $\partial\Mman$.
\end{itemize}

Indeed, since $\dif(\partial\Mman)=\partial\Mman$ and $\dif$ is isotopic to the identity map $\id_{\Mman}$, we get that $\dif(\Bman_j)=\Bman_j$ for all $j=1,\ldots,b$.

Moreover, by assumption $\func\circ\dif=\func$, whence $\dif(\func^{-1}(c)) = \func^{-1}(c)$ for all $c\in\Pman$.
Denote $c_i = \func(\Kman_i)$, $i=1,\ldots,a$.
Then $\Kman_i$ and $\dif(\Kman_i)$ are critical components of the same level set $\func^{-1}(c_i)$.
But $\func$ is generic, so  $\Kman_i$ is a unique critical component of $\func^{-1}(c_i)$.
Hence $\dif(\Kman_i) = \Kman_i$ for all $i$.
Thus every vertex of $\KRGraphf$ is fixed under $\rho(\dif)$.

Furthermore, due to \ref{enum:lm:f=sG:a:rho_h_id_on_H1} $\rho(\dif)$ trivially act on the group $H_1(\KRGraphf,\bZ)$ and therefore it leaves invariant each edge of $\KRGraphf$ with its orientation.
But as noted in~\ref{enum:lm:f=sG:image_rho_is_finite} $\rho(\dif)$ is determined by its permutation of edges, whence $\rho(\dif)$ is the identity.
\myqed
\end{proof}

\paragraph{\bf Morse equality.}
Let $\func\in\Morse{\Mman}{\Pman}$.
Denote by $c_i(\func)$ the number of critical points $\func$ of index $i=0,1,2$.
Then it is well known, e.g.~\cite[Corollary~2.10]{Nicolaescu:MorseTh:2007}, that $\Mman$ admits a CW-structure such that there is a bijection between the cells of dimension $i$ and critical point of $\func$ of index $i$.
In particular, we have the following identity, see~\cite[Corollary~2.15]{Nicolaescu:MorseTh:2007}:
\begin{equation}\label{equ:morse_eq}
\chi(\Mman) = c_0(\func) - c_1(\func) + c_2(\func).
\end{equation}

\section{Wreath products}\label{sect:wreath_products}
For a set $\bSet$ denote by $\Sigma(\bSet)$ the permutation group on $\bSet$, i.e.\! the group of all self-bijections of $\bSet$ with respect to the composition of maps.
Then $\Sigma(\bSet)$ naturally acts on $\bSet$ from the left.
Assume that some $\bGroup$ group also acts from the left of $\bSet$, which gives a certain homomorphism $\actHom:\bGroup\to\Sigma(\bSet)$.

Let $\aGroup$ be another group, and $\Map(\bSet,\aGroup)$ be the group of \myemph{all} maps $\bSet\to\aGroup$ with respect to the pointwise multiplication.
Then $\bGroup$ acts from the right on $\Map(\bSet,\aGroup)$ by the rule: if $\alpha\in\Map(\bSet,\aGroup)$ and $\hh\in\bGroup$, then the result of the action of $\hh$ on $\alpha$ is just the composition of maps:
\[
\alpha\circ\actHom(\hh):\bSet \xrightarrow{~\actHom(\hh)~} \bSet \xrightarrow{~\alpha~} \aGroup,
\]
which will be denoted by $\alpha^{\hh}$.
Then the semidirect product 
\[
\aGroup\wr_{\actHom}\bGroup:= \Map(\bSet,\aGroup)\rtimes_{\actHom}\bGroup
\]
corresponding to this $\bGroup$-action will be called the \myemph{wreath product of $\aGroup$ and $\bGroup$ over $\actHom$}.
In this case $\bGroup$ is also called the \myemph{top} group of $\aGroup\wr_{\actHom}\bGroup$.

Thus, $\aGroup\wr_{\actHom}\bGroup$ is the Cartesian product of sets $\Map(\bSet,\aGroup)\times\bGroup$ with the multiplication defined by the following formula:
\begin{equation}\label{equ:prod_in_wr}
(\alpha_1,\hh_1) \, (\alpha_2,\hh_2) =\bigl(\alpha_1^{\hh_2} \alpha_2, \,\hh_1\circ\hh_2\bigr).
\end{equation}
for $(\alpha_1,\hh_1), (\alpha_2,\hh_2)\in\Map(\bSet,\aGroup)\times\bGroup$.

If $\bGroup$ is a subgroup of $\Sigma(\bSet)$ and $j:\bGroup\subset\Sigma(\bSet)$ is the inclusion map, then the corresponding wreath product $\aGroup\wr_{j}\bGroup$ will be denoted by $\aGroup\wr_{\bSet}\bGroup$:
\begin{equation}\label{equ:action_of_perm_subgroups}
\aGroup\wr_{\bSet}\bGroup:= \aGroup\mathop{\wr}\limits_{j:\bGroup\subset\Sigma(\bSet)}\bGroup.
\end{equation}

Finally notice that the group $\bGroup$ acts on itself by left shifts, whence we have a natural inclusion map $j:\bGroup\to\Sigma(\bGroup)$ and can define the wreath product $\aGroup\wr_{j}\bGroup$, which in this particular case is usually denoted by $\aGroup\wr\bGroup$.

For example, the group $\aGroup\wr \bZ_{n}$ (appearing in Theorem~\ref{th:ClassesGf}) is the Cartesian product of sets $\underbrace{\aGroup\times \cdots \times \aGroup}_{n} \times \bZ_n$ with the following multiplication:
\[
(a_0,a_1,\, \cdots,a_{n-1}; k) \, (b_0,b_1,\, \cdots,b_{n-1}; l)  =
(a_l b_0, a_{l+1} b_1,\, \cdots,a_{l-1} b_{n-1}; k+l),
\]
where all indexes are taken modulo $n$.

\begin{mydefinition}\label{def:class_RG}
Let $\GG = \{G_i\}_{i\in \Lambda}$ an arbitrary family of finite groups.
Denote by $\RR(\GG)$, the minimal family of isomorphism classes of groups satisfying the following conditions:
\begin{enumerate}[label={\rm(\roman*)}]
\item $\UnitGroup \in \RR(\GG)$;
\item if $A,B\in\RR(\GG)$ and $G\in\GG$ then $A\times B, \, A \wr G \, \in \, \RR(\GG)$.
\end{enumerate}
\end{mydefinition}

In particular, in Theorem~\ref{th:ClassesGf}
\begin{align*}
\PClassAll &= \RR(\{ \bZ_{n}\}_{n\in\bN}), &
\PClassTwo &= \RR(\{ \bZ_{2}\} ).
\end{align*}

Consider examples of groups which belong to these families, and groups that do not belong to them.

\newcommand\msep{ && }
\begin{myexample}
For arbitrary $a,b,n,m\in\bN$:
\begin{align*}
&\UnitGroup,  \msep  \bZ_{n},  \msep  \bZ_{m}\wr \bZ_{n},  \msep  (\bZ_{m})^{a}\times (\bZ_{n})^{b},  \msep \bigl((\bZ_{m} \wr \bZ_{a})\times\bZ_{b}\bigr) \wr \bZ_{n}  \ \in  \ \PClassAll, \\
%%%%%%%%%%%
&\UnitGroup,  \msep \bZ_{2},  \msep \bZ_{2}\wr \bZ_{2},  \msep (\bZ_{2})^{a}\times (\bZ_{2})^{b},  \msep \bigl((\bZ_{2} \wr \bZ_{2})\times\bZ_{2}\bigr) \wr \bZ_{2}  \ \in  \ \PClassTwo.
\end{align*}
\end{myexample}

\begin{myremark}\rm
Wreath products $D\wr \bZ_{p}$, where $p$ is a simple, and $D$ is a $p$-group, for example, the groups $\bZ_{p}\wr \bZ_{p}$, $(\bZ_{p}\wr \bZ_{p})\wr \bZ_{p}$, were studied in~\cite{IovanovMasonMontgomery:MRL:2014}, \cite[Theorems~4.7, 5.5]{Keilberg:arXiv:2018} in connection with the so-called FSZ-property.
\end{myremark}

\begin{mylemma}\label{lm:wreath_prod_properties}{\rm \cite{Neumann:MathZ:1964}}
Let $A$ and $B$ be nontrivial groups.
Then
\begin{enumerate}[leftmargin=*, label={\rm(\arabic*)}]
\item\label{enum:wreath_prop_1}
$A \wr B \cong P \times Q$ for some non-trivial groups $P$ and $Q$ if and only if $B$ is finite, say of order $n$, and $A$ has a nontrivial direct abelian factor $C$ (which can be identical with $A$) such that every $c\in C$ has a unique root of $n$-th degree in $C$, {\rm\cite[Theorem~7.1]{Neumann:MathZ:1964}};

\item\label{enum:wreath_prop_2}
$A \wr B \cong P \wr Q$ if and only if $P\cong A$ and $Q\cong B$, {\rm\cite[Theorem~10.3]{Neumann:MathZ:1964}}.
\myqed
\end{enumerate}
\end{mylemma}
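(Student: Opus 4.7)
Both statements are classical results of B.~H.~Neumann \cite{Neumann:MathZ:1964}, so the plan is to follow Neumann's route rather than seek new arguments; what follows is a sketch of the conceptual steps.

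For the ``if'' direction of (1) the decomposition is entirely explicit. Write $A=C\times D$ with $C$ the prescribed nontrivial abelian direct factor and set $n=|B|<\infty$. Consider the diagonal embedding $\Delta\colon C\hookrightarrow\Map(B,A)$, $c\mapsto(b\mapsto c)$. Since $C$ is abelian, centralises $D$, and constant maps are fixed by the permutation action of $B$ on $\Map(B,A)$, the image $\Delta(C)$ lies in the centre of $A\wr B$. Moreover the ``norm'' homomorphism
\[
\sigma\colon C^{B}\longrightarrow C,\qquad \sigma(\alpha)=\prod_{b\in B}\alpha(b),
\]
is $B$-equivariant (with $B$ acting trivially on the target) and satisfies $\sigma\circ\Delta(c)=c^{n}$. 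The unique $n$-th root hypothesis means that $c\mapsto c^n$ is bijective on $C$, so $\sigma$ is a split epimorphism and $C^{B}=\Delta(C)\oplus K$ as $B$-invariant subgroups, where $K=\ker\sigma$. Since $\Delta(C)$ is central in the whole wreath product, this yields
\[
A\wr B \;\cong\; C\,\times\,\bigl((K\times D^{B})\rtimes B\bigr),
\]
and both factors are nontrivial.

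For the ``only if'' direction of (1) the idea is to argue that any nontrivial direct factor of $A\wr B$ must interact with the centre and with the base group $\Map(B,A)$ in a very constrained way. First, when $B$ is infinite one shows that $A\wr B$ admits no nontrivial direct decomposition, essentially because its centre is too small and its ``top'' quotient $B$ is indecomposable in a strong sense. When $B$ is finite of order $n$, the centre of $A\wr B$ is naturally identified with $\Delta(Z(A))$; any decomposition $P\times Q$ then forces the existence inside $Z(A)$ of a nontrivial subgroup $C$ on which raising to the $n$-th power is bijective, and a further analysis shows that $C$ splits off as a direct factor of $A$. For (2) the plan is to recover $A$ and $B$ as invariants of $A\wr B$: one identifies the base subgroup $\Map(B,A)$ as a canonically defined normal subgroup of $A\wr B$ by group-theoretic conditions, then recovers $B$ as the quotient $A\wr B/\Map(B,A)$ and $A$ as the stabiliser in $B$ of a single coordinate of $\Map(B,A)$.

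The main obstacle in both parts is precisely this intrinsic characterisation of the base subgroup $\Map(B,A)$ inside $A\wr B$: once it is available, the unique $n$-th root condition in~(1) and the cancellation property in~(2) follow by fairly direct arguments about centres, centralisers and the $B$-action on $\Map(B,A)$; without it, there is no canonical way to separate the ``top'' from the ``bottom''. Neumann achieves this characterisation via chains of normalisers and commutator calculations, and since for the applications in Theorem~\ref{th:ClassesGf} only the constructive ``if'' direction of~(1) is really used (to recognise when an iterated wreath product in $\PClassAll$ or $\PClassTwo$ splits off a direct factor), the deepest part of Neumann's argument enters essentially as a black box.
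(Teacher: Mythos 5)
The paper gives no proof of this lemma at all: it is stated as a pure citation of Neumann's Theorems 7.1 and 10.3, with an empty QED. Your sketch is therefore already more detailed than the source it is compared against, and the part you work out in full is correct. For the ``if'' direction of (1): a direct abelian factor $C$ of $A$ lies in $Z(A)$, so the constant functions $\Delta(C)\subset\Map(B,C)$ are both central in $A\wr B$ and fixed by the $B$-action; the norm $\sigma(\alpha)=\prod_{b\in B}\alpha(b)$ is a $B$-invariant homomorphism with $\sigma(\Delta(c))=c^{n}$, and bijectivity of the $n$-th power map on $C$ gives $\Map(B,C)=\Delta(C)\oplus\ker\sigma$ as $B$-modules, so the central summand $\Delta(C)\cong C$ splits off the whole wreath product. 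Your outlines of the converse and of (2) (centre equals $\Delta(Z(A))$, then an intrinsic characterisation of the base subgroup $\Map(B,A)$) do follow Neumann's actual route, and leaving those as black boxes is exactly what the paper does.

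One correction to your closing remark: it is \emph{not} the case that only the constructive ``if'' direction of (1) is used in the applications. Lemma~\ref{lm:wreath_prod_properties} is in fact never invoked inside the proof of Theorem~\ref{th:ClassesGf}; its only use in the paper is Example~\ref{ex:Z2_wr_Z2Z4_notin_GG}, showing $\bZ_q\wr(\bZ_q\times\bZ_{qr})\notin\PClassAll$, and that argument relies precisely on the ``only if'' direction of (1) (to exclude any nontrivial direct decomposition) together with the uniqueness statement (2) (to exclude any decomposition as a wreath product with cyclic top). So the deep, non-constructive halves of Neumann's theorems are the ones actually needed, and they cannot be replaced by the explicit splitting you constructed.
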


\begin{myexample}\label{ex:Z2_wr_Z2Z4_notin_GG}
For any $q\geq 2$ and $r\geq 1$ the group $W = \bZ_q \wr (\bZ_q\times\bZ_{qr})$ does not belong to $\PClassAll$.
\end{myexample}
\begin{proof}
Suppose, $W\in\PClassAll$.
Then either $W = P\times Q$ for some nontrivial groups $P,Q\in\PClassAll$, or $W = P\wr\bZ_{n}$ for some non-trivial group $P\in\PClassAll$ and $n\geq2$.

Suppose $W = \bZ_q \wr (\bZ_q\times\bZ_{qr}) \cong P \times Q$.
Then, due to~\ref{enum:wreath_prop_1} of Lemma~\ref{lm:wreath_prod_properties}, each element of the \myemph{nontrivial group} $\bZ_{q}$ must have a unique root of order $|\bZ_q\times\bZ_{qr}|=q^2r$.
However, $q^2r \cdot x =0$ for all $x\in\bZ_q$, and therefore every nonzero element of $\bZ_q$ has no roots of order $q^2r$ in $\bZ_{q}$.
Thus, $W$ can not be decomposed into a nontrivial direct product.

Suppose that $W = \bZ_q \wr (\bZ_q\times\bZ_{qr}) \cong P\wr \bZ_{n}$ for some $n\geq2$.
Then, according to~\ref{enum:wreath_prop_2} of Lemma~\ref{lm:wreath_prod_properties}, $P\cong \bZ_q$ and $\bZ_{n}\cong\bZ_q\times\bZ_{qr}$.
But $\bZ_q\times\bZ_{qr}$ is not a cyclic group, since $q$ divides $qr$.
Hence $W$ is not a wreath product with the top group $\bZ_n$. 

Thus, both cases are impossible, and so $W\not\in\PClassAll$.
\myqed
\end{proof}

\paragraph{Jordan's theorem.}
We will show now that the families of groups $\PClassAll$ and $\PClassTwo$ are related with Jordan's theorem on the group of automorphisms of trees.

For $n\geq1$ let $X_n := \{1,\ldots,n\}$ and $\Sigma_n:=\Sigma(X_n)$ be the permutation group of $X_n$, so $\Sigma_{n}$ acts from the left of $X_n$.
Let $\id_{\Sigma_n}:\Sigma_n\to\Sigma_n$ be the identity isomorphism of $\Sigma_n$, and $j_n:\Sigma_n \to \Sigma(\Sigma_n)$ the inclusion corresponding to the actions of $\Sigma_n$ on itself by left shifts.
Then for every group $\aGroup$ one can define two wreath products:
\begin{align*}
\aGroup\wr_{X_n} \Sigma_n &:= \Map(X_n,\aGroup)\rtimes_{\id_{n}}\Sigma_n, &
\aGroup\wr \Sigma_n &:= \Map(\Sigma_n,\aGroup)\rtimes_{j_{n}}\Sigma_n,
\end{align*}
corresponding to the actions of $\Sigma_n$ on $X_n$ and on itself respectively.
It is obvious, that $\aGroup\wr_{X_1} \Sigma_1 \cong \aGroup\wr \Sigma_1 \cong \aGroup$.
On the other hand, for $n\geq2$ these wreath products, in general, are not isomorphic.
Indeed, if $\aGroup$ is a finite group, then 
\begin{align*}
|\aGroup\wr_{X_n} \Sigma_n| &= |\aGroup|^n n!\,, &
|\aGroup\wr \Sigma_n| &= |\aGroup|^{n!} n!\,.
\end{align*}

Let $\RR_{\Sigma}$ be the minimal family of isomorphism classes of groups satisfying the following conditions:
\begin{enumerate}[label={\rm(\roman*)}]
\item the trivial group $\UnitGroup$ belongs to $\RR_{\Sigma}$;
\item if $A,B\in\RR_{\Sigma}$ and $n\geq 1$, then $A\times B, \, A \wr_{X_n} \Sigma_n \, \in \, \RR_{\Sigma}$.
\end{enumerate}

Thus, $\RR_{\Sigma}$ differs from $\RR(\{\Sigma_n\}_{n\geq1})$ by the assumption that the wreath products $A\wr \Sigma_n$ are replaced with $A\wr_{X_n} \Sigma_n$.
As noted in~\cite[Nr.~54]{Polya:AM:1937} the following theorem is a reformulation following the old result of C.~Jordan~\cite{Jordan:JRAM:1869} characterizing automorphisms groups of finite trees, see also~\cite[Proposition~1.15]{Babai:HC:1994} and \cite[Lemma~5]{KrasikovSchonheim:DM:1985}.

\begin{mytheorem}\label{th:Jordan_theorem}{\rm (C.~Jordan, \cite{Jordan:JRAM:1869}).}
A group $G$ is isomorphic to the group of all automorphisms of a finite tree if and only if $G\in\RR_{\Sigma}$.
\myqed
\end{mytheorem}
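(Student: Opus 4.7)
The plan is to prove both implications by induction on the number of vertices of the tree, using Jordan's classical theorem on tree centers as the structural backbone. It is convenient to work throughout with \emph{rooted} finite trees, whose automorphisms are required to fix the root, so that every inductive step preserves a distinguished fixed vertex; the unrooted automorphism group of interest will then be recovered at the top level.

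For the ``if'' direction I would argue by structural induction on the construction of $G\in\RR_{\Sigma}$. The base case assigns the trivial group to the one-vertex rooted tree. Given rooted trees $T_A, T_B$ with roots $r_A, r_B$ that realize $A=\Aut(T_A)$ and $B=\Aut(T_B)$, I realize $A\times B$ by attaching $r_A$ and $r_B$ to a new common root $w$ via paths of distinct lengths $\ell_A\neq\ell_B$; the asymmetry forbids any swap of the two sides and so forces every automorphism to act separately on each. I realize $A\wr_{X_n}\Sigma_n$ by taking $n$ disjoint copies of $T_A$ and joining their roots to a common new root $w$: every automorphism of the resulting tree must fix $w$, permute the $n$ rooted copies via an element of $\Sigma_n$, and act on each copy independently by an element of $A$, giving exactly $\Map(X_n,A)\rtimes\Sigma_n$.

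For the ``only if'' direction, let $T$ be a finite tree. By Jordan's theorem, $T$ has either a unique central vertex or a unique central edge, each invariant under $\Aut(T)$. In the vertex-center case, removing the center $v$ partitions $T$ into rooted subtrees rooted at the neighbors of $v$; automorphisms of $T$ fix $v$ and permute these subtrees while preserving rooted isomorphism type. Grouping them into isomorphism classes of multiplicities $n_1,\ldots,n_r$ with rooted automorphism groups $A_1,\ldots,A_r$, I obtain
\[
\Aut(T)\;\cong\;\prod_{i=1}^{r}\bigl(A_i\wr_{X_{n_i}}\Sigma_{n_i}\bigr),
\]
which lies in $\RR_{\Sigma}$ by the inductive hypothesis applied to each smaller $A_i$. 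In the edge-center case with edge $uv$, if the two rooted halves of $T\setminus uv$ are non-isomorphic then both $u$ and $v$ are fixed and the previous case applies to each side, yielding a direct product; if they are isomorphic with rooted automorphism group $A$, then $\Aut(T)\cong A\wr_{X_2}\Sigma_2\in\RR_{\Sigma}$.

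The main obstacle is to ensure that the tree produced in the ``if'' direction has \emph{exactly} the prescribed automorphism group and not a larger one. This is handled by the distinguishing path-length trick for the direct product, and for the wreath construction by arranging that the new root $w$ is the unique vertex of its degree or type (attaching a small pendant if necessary). A related subtlety is the rooted-versus-unrooted distinction, which is kept in sync by always rooting at the Jordan center, since the center is automatically preserved by every automorphism.
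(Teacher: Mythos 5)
The paper itself offers no proof of Theorem~\ref{th:Jordan_theorem}: it is quoted from Jordan via P\'olya, Babai and Krasikov--Sch\"onheim, so there is no internal argument to compare yours against. Your strategy is the standard one, and your ``only if'' direction is essentially complete and correct: induct on rooted trees using the decomposition $\Aut(T,r)\cong\prod_{i}\bigl(A_i\wr_{X_{n_i}}\Sigma_{n_i}\bigr)$ over isomorphism classes of branches at the root, then use the Jordan center (vertex or edge) to reduce the unrooted case to the rooted one; the three subcases you list for the center are exactly right.

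The ``if'' direction, however, has a genuine gap in the direct-product step. Distinctness of the path lengths $\ell_A\neq\ell_B$ does not prevent the two composite branches at the new root $w$ from being isomorphic: take $T_A$ a single vertex and $T_B$ a single edge rooted at one end (so $A=B=\UnitGroup$), with $\ell_A=2$ and $\ell_B=1$; then both branches at $w$ are rooted paths on two vertices, the whole tree is a path on five vertices, and its automorphism group is $\Sigma_2$, not $A\times B=\UnitGroup$. The invariant that actually separates the branches is their height below $w$, i.e.\ you must arrange $\ell_A+\mathrm{height}(T_A)\neq\ell_B+\mathrm{height}(T_B)$. But this clashes with your rooted-to-unrooted step: if the two branches at $w$ have different heights, $w$ is no longer the Jordan center of the resulting tree (the center drifts into the taller branch), so ``rooting at the center'' no longer identifies $\Aut(T)$ with the rooted group at $w$, and the isomorphism $\Aut(T)\cong A\times B$ needs a separate argument. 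A standard repair is to strengthen the induction hypothesis: realize every $G\in\RR_{\Sigma}$ as the rooted automorphism group of a tree all of whose leaves lie at one common prescribed depth $h$, so that a new root with at least two branches is automatically the unique center, and distinguish the two factors of a product by an invariant other than height (checking the two equal-depth realizations are non-isomorphic as rooted trees, and modifying one of them when they are not, without introducing new symmetries). Carrying out this bookkeeping is precisely the content of the cited references; as written, your product step does not close.
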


Note that Theorem~\ref{th:ClassesGf} characterizes in a similar way families of groups $\Gf$ of automorphisms of graphs $\KRGraphf$ of Morse maps $\func:\Mman\to\Pman$ for compact surfaces $\Mman\not=S^2, T^2$ induced by diffeomorphisms belonging to $\StabilizerIsotId{\func}$.
For instance, one can assume that $\Mman$ is a $2$-disk, whence $\KRGraphf$ is a tree, see~\ref{enum:lm:f=sG:a:rho_h_id_on_H1} of Lemma~\ref{lm:f=sG},
whence such groups correspond to special automorphisms of finite trees.

\section{Proof of Theorem~\ref{th:ClassesGf}}
First we will give another description of the sets $\PClassAll$ and $\PClassTwo$.
Consider the following countable alphabet:
\[
 \mathcal{A}=\{ \ 1 \ , \ (\ , \ )\ ,  \ \times\ ,  \ \wr \bZ_{2}\ , \ \wr \bZ_{3}\ , \ \wr \bZ_{4}\ , \ \cdots \ \},
\]
that is, for each $n\in\bN$ we regard $\wr\bZ_{n}$ as one letter.

Denote by $\AdmisWords$ the minimal subset consisting of finite words from the alphabet $\mathcal{A}$ satisfying the following conditions:
\begin{enumerate}[label={\rm(\roman*)}]
\item $1 \in \AdmisWords$;
\item if $\aword, \bword\in \AdmisWords$ and $n\in\bN$, then the words $(\aword)$, $(\aword) \times (\bword)$, and $(\aword)\wr\bZ_{n} \ \in \AdmisWords$.
\end{enumerate}

The words from $\AdmisWords$ will be called \myemph{admissible}.
Evidently, every admissible word $\word$ uniquely defines (up to an isomorphism) a certain finite group $\nu(\word)$.
Let $\PClassAll^{*}$ the family of isomorphism classes of groups that are written by admissible words.
We claim that $\PClassAll^{*} = \PClassAll:=\RR(\{ \bZ_{n}\}_{n\in\bN})$, whence the correspondence $\word\mapsto\nu(\word)$ will be a surjective map $\nu: \AdmisWords\to\PClassAll$.

Indeed, $\PClassAll^{*}$ obviously satisfies the conditions~\ref{enum:class:a1} and~\ref{enum:class:b1} of the definition of $\PClassAll$ in Theorem~\ref{th:ClassesGf} and therefore contains the minimal set $\PClassAll$, that is $\PClassAll^{*} \supset \PClassAll$.

Conversely, if $\PClassAll'$ is an arbitrary family of isomorphism classes of groups satisfying the same conditions, then it necessarily contains all groups written by admissible words, that is $\PClassAll^{*} \subset \PClassAll'$.
In particular, $\PClassAll^{*} \subset \PClassAll$, and hence, $\PClassAll^{*} = \PClassAll$.

\medskip

Consider further the following finite subalphabet in $\mathcal{A}$:
\begin{equation*}\label{equ:alphabet_2}
 \mathcal{A}_2=\{ \ 1 \ , \ ( \ , \ )\ ,  \ \times\ ,  \ \wr \bZ_{2} \ \}
\end{equation*}
and denote by $\AdmisWords_2$ is the minimal subset of finite words from the alphabet $\mathcal{A}_2$, satisfying the following conditions:
\begin{enumerate}[label={\rm(\roman*)}]
\item $1 \in \AdmisWords_2$;	
\item if $A, B \in \AdmisWords_2$, then the words $(A)$, $(A) \times (B)$, $(A) \wr \bZ_{2}$ also belong to $\AdmisWords_2$.
\end{enumerate}
Then $\AdmisWords_2 \subset \AdmisWords$ and by similar arguments, $\PClassTwo = \nu(\AdmisWords_2)$.

Let us also mention the following simple statement which we will use several times:
\begin{mylemma}\label{lm:subwords}
A word $\word\in\AdmisWords$ belongs to $\AdmisWords_2$ if and only if $\word$ does not contain letters $\wr\bZ_n$ for $n\geq3$.
Hence if $\word\in\AdmisWords_2$ and $\aword$ is a subword (a consecutive sequence of symbols) of $\word$ belonging to $\AdmisWords$, then $\aword\in\AdmisWords_2$ as well.
\myqed
\end{mylemma}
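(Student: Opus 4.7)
The plan is to prove the biconditional in the first sentence by parallel structural inductions on the two recursive definitions, and then to read off the subword statement as an immediate corollary.

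For the ``only if'' direction I would induct on a derivation of $\word\in\AdmisWords_2$. Since $\AdmisWords_2$ is generated exclusively from symbols of the subalphabet $\mathcal{A}_2=\{\,1,\,(\,,\,)\,,\,\times\,,\,\wr\bZ_2\,\}$, no formation step can introduce a letter $\wr\bZ_n$ with $n\geq 3$, so the conclusion is immediate.

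For the ``if'' direction I would induct on a derivation of $\word\in\AdmisWords$ under the hypothesis that $\word$ contains no symbol $\wr\bZ_n$ with $n\geq 3$. The base case $\word=1$ lies in $\AdmisWords_2$ trivially. In the inductive step $\word$ has one of the three forms $(\aword)$, $(\aword)\times(\bword)$, or $(\aword)\wr\bZ_n$ with $\aword,\bword\in\AdmisWords$. The subwords $\aword,\bword$ inherit the property of containing no $\wr\bZ_n$ with $n\geq 3$, so the inductive hypothesis places them in $\AdmisWords_2$; in the third form the hypothesis on $\word$ forces $n=2$, and the closure of $\AdmisWords_2$ under the operation $\wr\bZ_2$ then yields $\word\in\AdmisWords_2$.

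The second sentence of the lemma follows at once from the biconditional: if $\word\in\AdmisWords_2$, the ``only if'' direction guarantees that $\word$ contains no letter $\wr\bZ_n$ with $n\geq 3$, hence neither does any consecutive subword $\aword$; and if that subword additionally lies in $\AdmisWords$, then the ``if'' direction gives $\aword\in\AdmisWords_2$. I do not anticipate any genuine obstacle here, as both halves reduce to routine structural inductions on recursively defined sets of words. The only mild subtlety is that an admissible word may admit more than one parse, but this is harmless since any chosen derivation suffices to run the induction.
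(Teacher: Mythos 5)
Your argument is correct and is precisely the routine structural induction that the paper has in mind when it states Lemma~\ref{lm:subwords} without proof as a ``simple statement'': the ``only if'' direction is immediate from the alphabet $\mathcal{A}_2$, the ``if'' direction follows by induction on a derivation of $\word\in\AdmisWords$ (forcing $n=2$ in the wreath case), and the subword claim is the corollary you describe. Your remark that any parse of an admissible word suffices to run the induction correctly disposes of the only potential subtlety.
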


Suppose now that $\Mman$ is a compact orientable surface distinct from $S^2$ and $T^2$.
First we will prove that
\begin{align}\label{equ:P_in_G}
\PClassTwo &\subset \ClassGfSmpM, &
\PClassAll &\subset \ClassGfM.
\end{align}
In other words, we need to show that for every admissible word $\word \in \AdmisWords$ there exists a Morse map $\func \in \Morse{\Mman}{\Pman}$ such that $\nu(\word)\simeq\Gf$.
Moreover, if $\word \in \AdmisWords_2$, then such an $\func$ can be chosen to be a simple Morse map.

We will use an induction on the length $\len(\word)$ of the word $\word$ simultaneously for all orientable surfaces $\Mman\not=S^2, T^2$ and $\Pman=\bR$ or $S^1$.

a) Let $\len(\word)=1$, so $\word=1$, and hence $\nu(\word)\cong\UnitGroup$ is the trivial group.
Then, due to Lemma~\ref{lm:f=sG}, for each generic Morse map $\func:\Mman\to\Pman$ on any compact surface $\Mman$ we have that 
\[\nu(\word) \cong \UnitGroup \cong \Gf\in \ClassGfSmpM \subset \ClassGfM.\]

b) Let $\len(\word) = n\geq1$ and assume that we have already proven that
\begin{itemize}%[leftmargin=3ex]
\item for each admissible word $\aword \in \AdmisWords$ of length $\len(\aword)<n$ and each orientable surface $\Mman\not=S^2,T^2$ and $\Pman=\bR$ or $S^1$ the group $\nu(\aword) \in \ClassGfM$;
\item moreover, if $\aword \in \AdmisWords_2$, then $\nu(\aword) \in \ClassGfSmpM$.
\end{itemize}

We need to prove that $\nu(\word) \in \ClassGfM$, and moreover, if $\word \in \AdmisWords_2$ then $\nu(\word) \in \ClassGfSmpM$.

Fix any generic Morse map $\gfunc:\Mman\to\Pman$ having a unique local maximum $z\in\Mman$.
According to the definition of admissible words we should consider the following three cases.

\begin{enumerate}[leftmargin=*, itemindent=\parindent, label={\rm(\arabic*)}]
\item\label{enum:M:brackets}
Suppose $\word = (\aword)$ for some $\aword \in \AdmisWords$.
Then $\nu(\word)\cong \nu(\aword)$ and
\begin{align*}
\len(\aword) &= n-2<n.
\end{align*}
Hence, by induction, $\nu(\aword) \in \ClassGfM$.
Also if $\word \in \AdmisWords_2$, then by Lemma~\ref{lm:subwords} $\aword \in \AdmisWords_2$ as well.
Hence, by induction, $\nu(\word)\cong \nu(\aword) \in \ClassGfSmpM$.

\item\label{enum:M:prod}
Suppose $\word=(\aword) \times (\bword)$ for some admissible words $\aword, \bword \in \AdmisWords$.
Then their lengths are less than $n$, whence, by the induction, there exist \myemph{functions} $\alpha, \beta \in \Morse{D^2}{\bR}$ on a $2$-disk such that $\nu(\aword)\cong \Ga$ and $\nu(\bword)\cong \Gb$.
Using $\alpha$ and $\beta$ we can change $\gfunc$ in a neighborhood of the point $z$ such that the resulting map $\func$ will belong to $\Morse{\Mman}{\Pman}$ and $\Gf\cong \Ga \times \Gb$, see Figure~\ref{fig:AxB}.
Then,
\[
 \nu(\aword) \times \nu(\bword) \cong \Ga \times \Gb \cong \Gf \ \in \ \ClassGfM.
\]

If $\word \in \AdmisWords_2$, then due to Lemma~\ref{lm:subwords} $\aword, \bword \in \AdmisWords_2$ as well, so by induction, $\alpha$ and $\beta$ can be chosen to be simple.
Then $\func$ will also be simple.
Hence $\nu(\aword) \times \nu(\bword) \cong \Gf\in\ClassGfSmpM$.

\begin{figure}[h]
\centerline{
\begin{tabular}{ccccc}
\includegraphics[height=1.8cm]{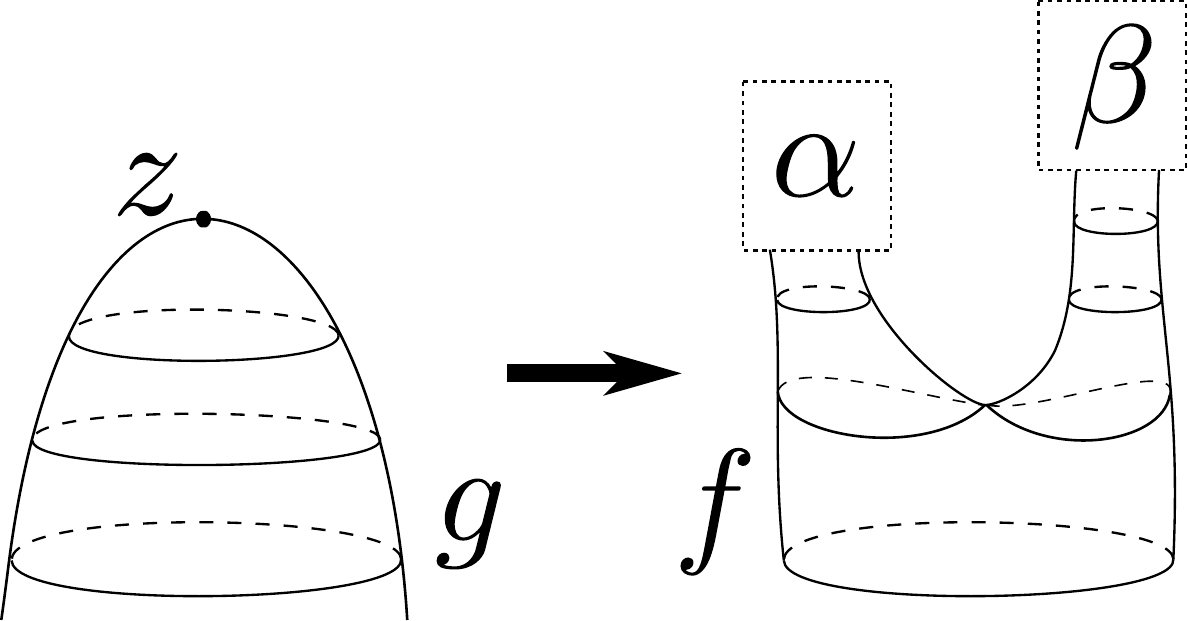} & \quad\qquad &
\includegraphics[height=1.8cm]{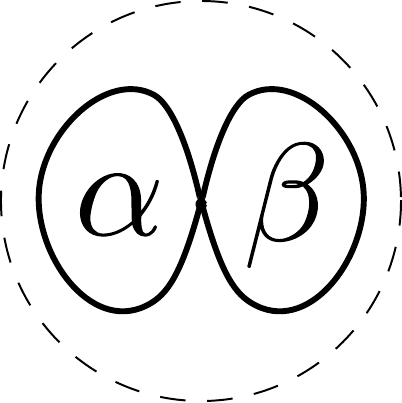} & \quad\qquad &
\includegraphics[height=1.8cm]{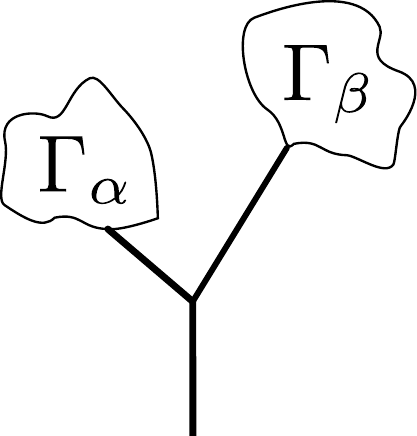} \\[2mm]
a) Functions $\gfunc$ and $\func$ & &  b) New critical component of & &  c) Graph $\KRGraphf$ \\
                               & &        level set of $\func$
\end{tabular}
}
\caption{$\Gf\cong \Ga \times \Gb$}\label{fig:AxB}
\end{figure}

\item\label{enum:M:wreathk}
In all other cases, $\word=(\aword) \wr \bZ_{k}$, for some $k\in\bN$ and some admissible word $A \in \AdmisWords$ such that $\len(\aword) = n-3 < n$.
Then by induction there exists a Morse \myemph{function} $\alpha \in \Morse{D^2}{\bR}$ on a $2$-disk, such that $\nu(A)\cong\Ga$.
Now, using $\alpha$ we can change $\gfunc$ in a neighborhood of the point $z$ such that the resulting map $\func$ will belong to $\Morse{\Mman}{\Pman}$, and $\Gf\cong \Ga\wr \bZ_{k}$, see Figure~\ref{fig:AwrZk} for $k=5$.
Then,
\[
\nu(\word) \cong  \nu(\aword) \wr \bZ_{2} \cong \Ga  \wr \bZ_{k} \cong \Gf \ \in \ \ClassGfM.
\]

\begin{figure}[h]
\centerline{
\begin{tabular}{ccc}
\includegraphics[height=2cm]{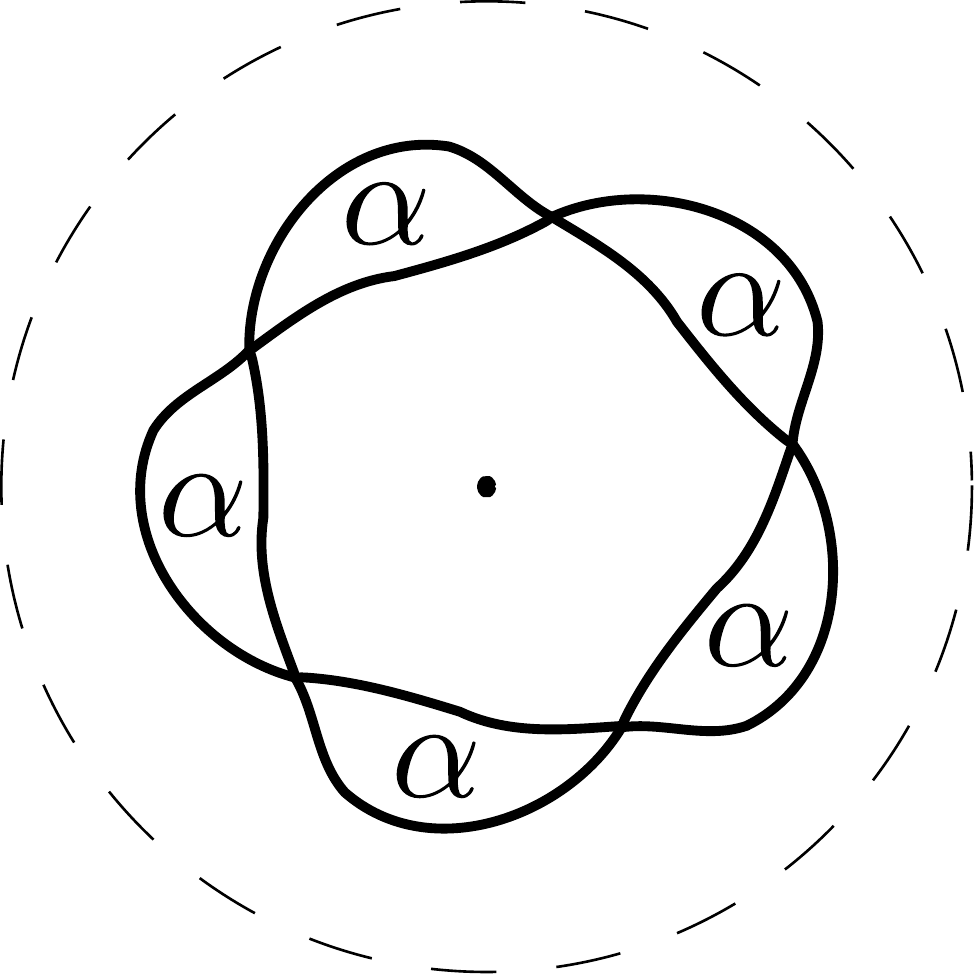} & \qquad\qquad &
\includegraphics[height=2cm]{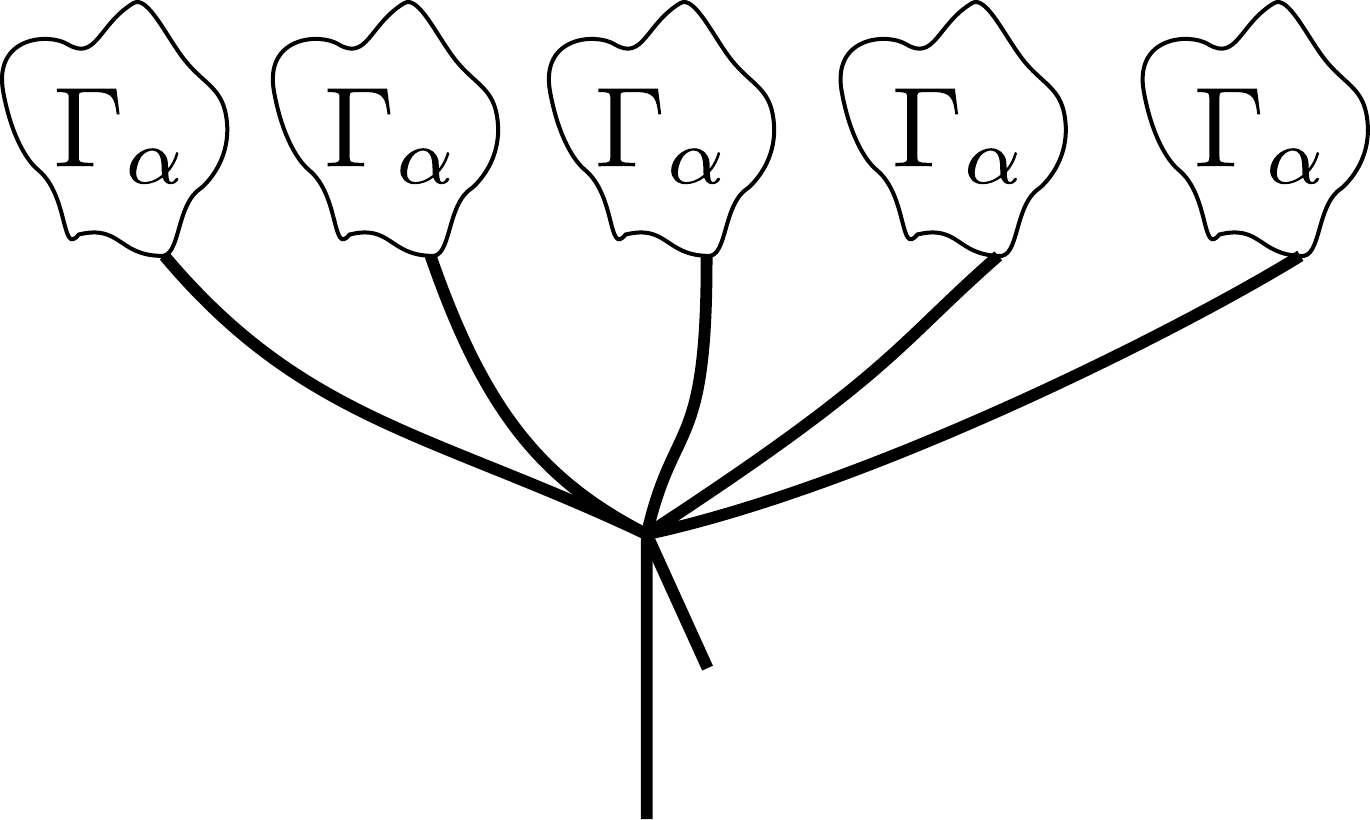} \\
a) New critical component of & & b) Graph $\KRGraphf$ \\
   level set of $\func$
\end{tabular}
}
\caption{$\Gf\cong \Ga \wr \bZ_{5}$}\label{fig:AwrZk}
\end{figure}

If $\word\in\AdmisWords_2$, then $k=2$ and by Lemma~\ref{lm:subwords} $\aword \in \AdmisWords_2$.
Therefore, by induction, $\alpha$ can be chosen to be simple, and since $k=2$, the map $\func$ will also be simple.
Thus $\nu(\word) \cong \nu(\aword) \wr \bZ_{2}\cong \Gf\in\ClassGfSmpM$, see Figure~\ref{fig:AwrZ2}.

\begin{figure}[h]
\centerline{
\begin{tabular}{ccccc}
\includegraphics[height=1.8cm]{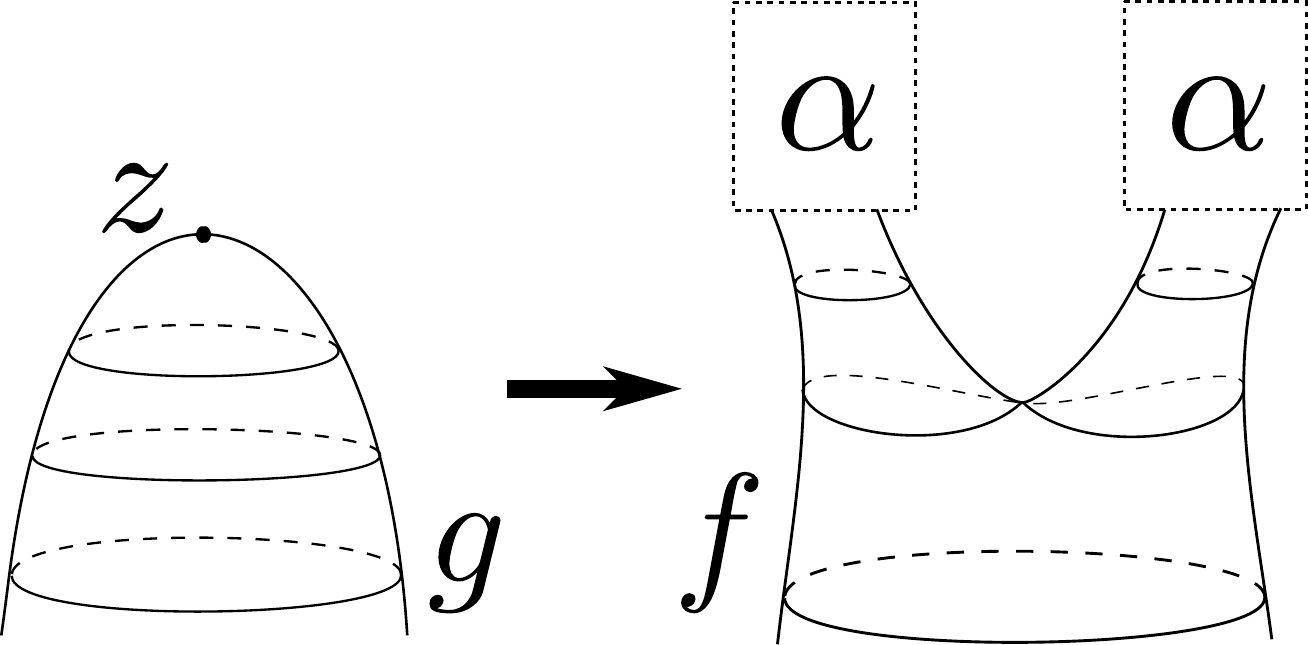} & \quad &
\includegraphics[height=1.8cm]{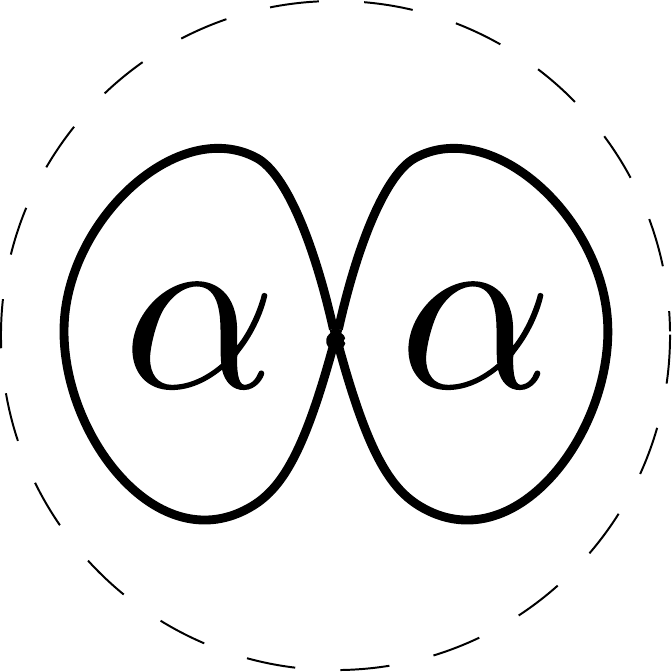} & \quad &
\includegraphics[height=1.8cm]{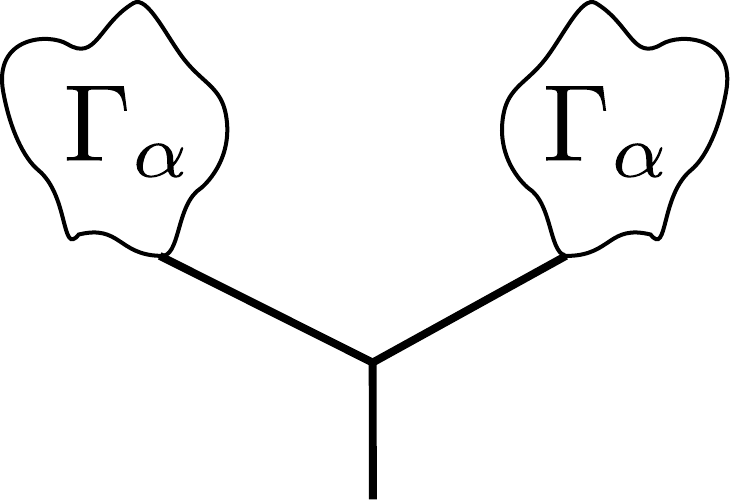} \\[2mm]
a) Functions $\gfunc$ and $\func$  & & b) New critical component & &  c) Graph $\KRGraphf$ \\
                                   & &    line levels $\func$
\end{tabular}
}
\caption{$\Gf\cong \Ga \wr \bZ_{2}$}\label{fig:AwrZ2}
\end{figure}
\end{enumerate}

This proves~\eqref{equ:P_in_G}.
Let us establish the inverse inclusions
\begin{align}\label{equ:G_in_P}
\ClassGfSmpM &\subset \PClassTwo, &
\ClassGfM &\subset \PClassAll.
\end{align}

First notice that it is enough to prove them only for $\Mman=D^2, S^1\times [0,1]$.
Indeed, recall the following statement:
\begin{mylemma}\label{lm:Gf_prod_decomp}
{\rm\cite[Theorem~1.7]{Maksymenko:MFAT:2010}, \cite[Theorem~5.3]{Maksymenko:DefFuncI:2014}.}
Let $\Mman$ be a compact orientable surface distinct from $S^2$ and $T^2$ and let $\func\in\Morse{\Mman}{\Pman}$.
Then there exist finitely many subsurfaces $\Mman_1,\ldots,\Mman_k \subset \Mman$ such that
\begin{enumerate}[label={\rm(\arabic*)}]
\item each $\Mman_i$ is either $2$-disk $D^2$, or a cylinder $S^1\times[0,1]$, and $\Mman_i\cap \Mman_j = \varnothing$ for $i\not= j$;
\item the restriction $\func|_{\Mman_i}: \Mman_i\to\Pman$ belongs to $\Morse{\Mman_i}{\Pman}$, $i=1,\ldots,k$;
\item\label{enum:lm:Gf_prod_decomp:prod} $\Gf \cong \Grpf{\func|_{\Mman_1}} \ \times \ \cdots \ \times \Grpf{\func|_{\Mman_k}}$.
\myqed
\end{enumerate}
\end{mylemma}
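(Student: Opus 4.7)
My strategy is to find a finite collection of pairwise disjoint simple closed curves $C_1,\ldots,C_N\subset\Mman$, each a connected component of a \emph{regular} level set of $\func$, such that the closures of the connected components of $\Mman\setminus(C_1\cup\cdots\cup C_N)$ are precisely the desired subsurfaces $\Mman_1,\ldots,\Mman_k$ (each a $2$-disk or an annulus) and such that every $\dif\in\StabilizerIsotId{\func}$ satisfies $\dif(C_j)=C_j$ for every $j$. Once such an invariant cutting system is in hand, restriction gives a homomorphism $\Gf\to\prod_i\Grpf{\func|_{\Mman_i}}$, which I expect to be the claimed isomorphism in~\ref{enum:lm:Gf_prod_decomp:prod}.

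\textbf{Steps.} I would proceed as follows. First, exploit the hypothesis $\Mman\ne S^2,T^2$ together with the surjectivity of $p_1:H_1(\Mman,\bZ)\to H_1(\KRGraphf,\bZ)$ from Lemma~\ref{lm:f=sG} to canonically select one essential regular level circle of $\func$ for each independent $1$-cycle of $\KRGraphf$; adding further cuts at regular values if necessary (e.g., to separate genus-zero pieces having too many boundary components) reduces every complementary piece to Euler characteristic $0$ or $1$, hence to a disk or an annulus. Second, deduce the invariance $\dif(C_j)=C_j$ from Lemma~\ref{lm:f=sG}: $\rho(\dif)$ acts trivially on $H_1(\KRGraphf,\bZ)$, and $\dif$, being ambient isotopic to $\id_{\Mman}$, preserves the isotopy class of every embedded circle; combining these, the canonically chosen $C_j$ cannot be nontrivially permuted among themselves at a common level. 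Third, establish surjectivity of the restriction homomorphism by lifting any compatible tuple $(\rho(\dif_i))_i$ to diffeomorphisms $\dif_i\in\StabilizerIsotId{\func|_{\Mman_i}}$ that can be arranged to equal the identity in a collar of $\partial\Mman_i$ (feasible because the $C_j$ are regular and each $\Mman_i$ is a disk or annulus), and then gluing along the $C_j$ to obtain a single $\dif\in\StabilizerIsotId{\func}$ realizing the tuple. Injectivity is automatic, since a diffeomorphism of $\Mman$ is determined by its restrictions to the $\Mman_i$'s.

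\textbf{Main obstacle.} The hardest step will be the first one: producing an invariant cutting system entirely within regular level sets of $\func$ so that \emph{all} complementary pieces are disks or annuli, with no positive-genus piece slipping through. This will require a careful combinatorial analysis of $\KRGraphf$ (its cycles, high-valence vertices, and their interplay with the Morse equality $\chi(\Mman)=c_0(\func)-c_1(\func)+c_2(\func)$) together with a verification that the canonical choice is genuinely preserved, rather than merely permuted, by every element of $\StabilizerIsotId{\func}$. A secondary but nontrivial technical point is the surjectivity argument in the third step: the collar-identity adjustment of each $\dif_i$ must be performed while keeping it inside $\StabilizerIsotId{\func|_{\Mman_i}}$, i.e., compatibly with the Morse condition on $\func|_{\Mman_i}$ and the isotopy-to-identity constraint in $\Mman_i$.
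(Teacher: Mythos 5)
The paper does not prove this lemma at all: it is quoted verbatim from \cite[Theorem~1.7]{Maksymenko:MFAT:2010} and \cite[Theorem~5.3]{Maksymenko:DefFuncI:2014}, so there is no internal proof to compare against. Judged on its own, your proposal has a genuine gap, and it is exactly the one you flag as the ``main obstacle'': an invariant cutting system consisting of regular level circles whose complementary pieces are all disks and annuli simply does not exist in general. The closure of the complementary component containing a critical component $\Kman$ of a level set (an ``atom'') is a regular neighborhood of the graph $\Kman$, and it can have positive genus. For example, on a closed orientable surface of genus $2$ take a Morse function with one minimum, one maximum, and all four saddles lying on a single critical component; the regular neighborhood of that component is a genus-$2$ surface with two boundary circles, and $\func$ has no interior regular level circles there along which to cut it further. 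No amount of combinatorial analysis of $\KRGraphf$ rescues this. Note also that the lemma does \emph{not} assert that the $\Mman_i$ cover $\Mman$ --- it asserts $\Mman_i\cap\Mman_j=\varnothing$, so they are genuinely disjoint pieces of a generally larger surface. Your reading of the statement as a decomposition of all of $\Mman$ into disks and annuli is what forces you into the impossible construction. The actual content of the cited theorems is different and harder: one isolates disjoint disk and cylinder subsurfaces on which all the nontrivial symmetry of $\KRGraphf$ is concentrated, and proves that the remaining (possibly positive-genus) part of $\Mman$ contributes only trivially to $\Gf$. That triviality rests on the theory of $\func$-adapted incompressible subsurfaces and on the constraint that elements of $\StabilizerIsotId{\func}$ are isotopic to $\id_{\Mman}$ (cf.\ part~\ref{enum:lm:f=sG:a:rho_h_id_on_H1} of Lemma~\ref{lm:f=sG}); it is not a consequence of choosing level circles ``canonically''.

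Your third step also needs more care than you allow. Gluing diffeomorphisms $\dif_i$ that are the identity near $\partial\Mman_i$ does produce a diffeomorphism of $\Mman$, but membership in $\StabilizerIsotId{\func}$ requires the glued map to be isotopic to $\id_{\Mman}$, and an isotopy of $\dif_i$ to $\id_{\Mman_i}$ inside $\Mman_i$ need not fix $\partial\Mman_i$, so these isotopies do not glue; the standard counterexample is a Dehn twist along a boundary-parallel curve, which is isotopic to the identity in an annulus but may fail to be isotopic to the identity in $\Mman$. Controlling this discrepancy (and showing the restriction map is well defined and injective on the level of $\Gf$, not just on diffeomorphisms) is part of what the cited references do and what your sketch leaves unaddressed.
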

Now if~\eqref{equ:G_in_P} is shown for a $2$-disk and a cylinder, then $\Grpf{\func|_{\Mman_i}} \in \PClassAll$ for all $i=1,\ldots,k$.
Since $\PClassAll$ is closed with respect to finite direct products, we will get that $\Gf\in\PClassAll$.
Moreover, if $\func$ is a simple, then $\func|_{\Mman_i}$ will also be simple.
Therefore $\Grpf{\func|_{\Mman_i}} \in \PClassTwo$ for all $i$, whence, $\Gf\in\PClassTwo$ as well.
This will prove the inclusions~\eqref{equ:G_in_P}.

\medskip

Let $\Mman=D^2$ or $S^1\times I$ and $\func\in \Morse{\Mman}{\Pman}$.
We need to show that $\Gf\in\PClassAll$, and if $\func$ is simple then $\Gf\in\PClassTwo$.

Without loss of generality we can assume that $\func$ takes a local minimum on some connected component $\gamma$ of the boundary $\partial\Mman$.
We will use an induction on the number $c_1(\func)$ of saddle critical points of $\func$.

\medskip

1) Suppose $c_1(\func)=0$.
If $\Mman=S^1\times[0,1]$ then by~\eqref{equ:morse_eq}
\[
0 = \chi(S^1\times[0,1]) = c_{0}(\func) + c_{2}(\func),
\]
whence $c_{0}(\func) = c_{2}(\func)=0$, so $\func$ have no critical points at all, see Figure~\ref{fig:no_crpt}a).

If $\Mman=D^2$, then again by~\eqref{equ:morse_eq}
\[1 = \chi(D^2) = c_{0}(\func) + c_{2}(\func),\]
and therefore $\func$ has only one critical point which is an extremum (in this case the maximum, since we assumed that $\func$ takes the minimum value on $\partial D^2 = \gamma$), see Figure~\ref{fig:no_crpt}b).
In both cases the graph $\KRGraphf$ of the map $\func$ consists of one edge, whence
\[\Gf\cong\UnitGroup\in\PClassTwo \subset \PClassAll.\]

\begin{figure}[ht]
\centerline{
\begin{tabular}{ccc}
\includegraphics[height=2cm]{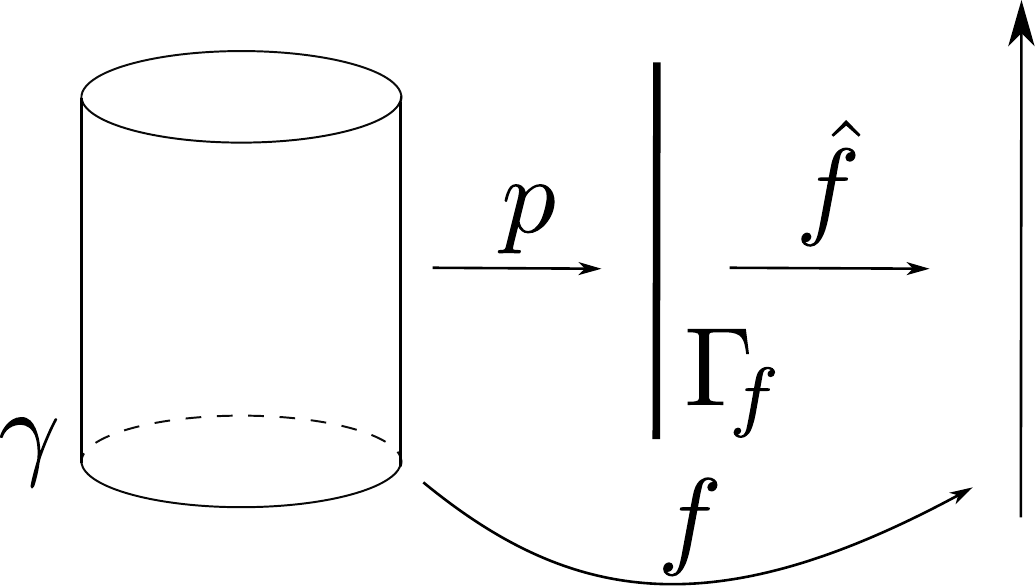} & \qquad\qquad &
\includegraphics[height=2cm]{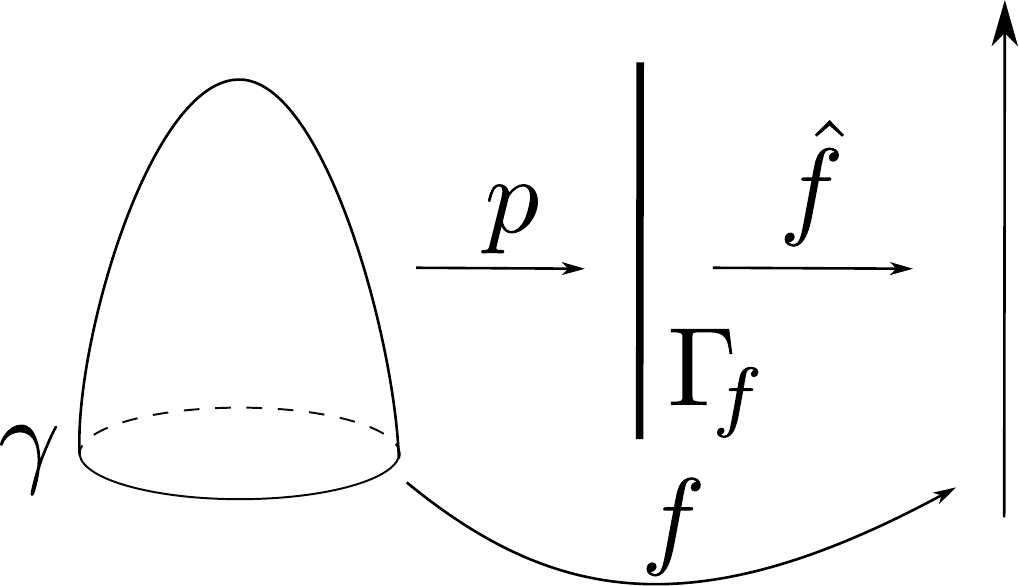} \\[2mm]
a) && b)
\end{tabular}}
\caption{Functions with minimal number of critical points}\label{fig:no_crpt}
\end{figure}

2) Now let $c_1(\func)\geq1$, i.e. $\func$ has at least one saddle critical point.
Assume that we have proved that $\Ga \in \PClassAll$ for all Morse maps $\alpha$ on $D^2$ and $S^1\times I$ with $c_1(\alpha) < c_1(\func)$.

Let $u$ be the vertex of the graph $\KRGraphf$ corresponding to the boundary component $\gamma \subset\partial\Mman$, $(u,v)$ be a unique edge $\KRGraphf$ incident to $u$, $\Kman=p^{-1}(v)$ be the critical component of level set of $\func$ corresponding to $v$, and $c = \func(\Kman)$.

Fix small $\varepsilon>0$ and let $\Rman$ be the connected component of $\func^{-1}([c-\varepsilon,c+\varepsilon])$ containing $\Kman$, (since $\Pman$ is an abelian group for both cases $\bR$ and $S^1=\bR/\bZ$, the segment $[c-\varepsilon,c+\varepsilon] \subset\Pman$ is well defined).
Decreasing $\varepsilon$ one can also assume that $\Kman$ does not intersect $\partial\Mman$ and contains no other critical points of $\func$ except for the one belonging to $\Kman$, see Figure~\ref{fig:regnbh}.

Then $\dif(\Rman)=\Rman$ for all $\dif\in\StabilizerIsotId{\func}$, and therefore $\overline{\Mman\setminus\Rman}$ is also invariant with respect to $\StabilizerIsotId{\func}$.
Let
\begin{itemize}[label=$\bullet$]
\item
$\Xman_0,\ldots,\Xman_a$ be all connected components of $\overline{\Mman\setminus\Rman}$, invariant with respect to $\StabilizerIsotId{\func}$, that is $\dif(\Xman_i)=\Xman_i$ for all $\dif\in\StabilizerIsotId{\func}$ and $i=0,\ldots,a$,

\item
$\Yman_1,\ldots,\Yman_b$ be all other connected components of $\overline{\Mman\setminus\Rman}$.
\end{itemize}
We can also assume that $\gamma\subset \Xman_0$;

Notice that each connected component of the boundary $\partial\Mman$ is preserved by each $\dif\in\StabilizerIsotId{\func} = \Stabilizer{\func} \cap \DiffIdM$, since it is isotopic to $\id_{\Mman}$.
Moreover, as $\gamma\subset \Xman_0$, and every connected component of $\partial\Rman$ (being a simple closed curve) separates $\Mman$, it follows that
\begin{itemize}[label=$\bullet$]
\item
$\Xman_0$ is a cylinder, $\Xman_i$, $i=1,\ldots,a$, is either a $2$-disk, or a cylinder, and every $\Yman_j$, $j=1,\ldots,b$, is a $2$-disk;

\item
the restriction of $\func$ to each $\Xman_i$ and $\Yman_j$ is Morse in the sense of Definition~\ref{def:MorseFunc}, and $c_1(\func|_{\Xman_i}), \, c_1(\func|_{\Yman_j})  <  c_1(\func)$, so, by induction, $\Grpf{\func|_{\Xman_i}}, \, \Grpf{\func|_{\Yman_j}}  \in \PClassAll$.
\end{itemize}
\begin{mylemma}\label{lm:Gf_on_disk_cyl}
{\rm\cite[Theorem~5.7]{Maksymenko:DefFuncI:2014}}
\begin{enumerate}[leftmargin=*, label={\rm(\arabic*)}]
\item\label{enum:lm:Gf_on_disk_cyl:b0}
Suppose that each connected component of $\overline{\Mman\setminus\Rman}$ is invariant with respect to $\Stabilizer{\func}$, i.e. there is no $\Yman_j$-components.
Then $\Gf \cong \prod_{i=1}^{a} \Grpf{\func|_{\Xman_i}}$.

\item\label{enum:lm:Gf_on_disk_cyl:b_pos}
Otherwise, $b=cm$ for some $c,m\in\bN$, and the surfaces $\{\Yman_j\}$ can be renumbered by pairs of indices $(k,l)$ as follows:
\begin{equation}\label{equ:renumbering_Yij}
\begin{matrix}
\Yman_{1,0} & \Yman_{1,1} & \cdots & \Yman_{1,m-1} \\
\Yman_{2,0} & \Yman_{2,1} & \cdots & \Yman_{2,m-1} \\
\cdots      & \cdots      & \cdots & \cdots        \\
\Yman_{c,0} & \Yman_{c,1} & \cdots & \Yman_{c,m-1}
\end{matrix}
\end{equation}
so that every diffeomorphism $\dif\in \Stabilizer{\func}$ \myemph{cyclically shifts the columns in~\eqref{equ:renumbering_Yij}}, i.e.\! there exists $n\in\{0,\ldots,m-1\}$ depending on $\dif$, such that
\[\dif(\Yman_{k,l}) = \Yman_{k,\, l+n\, \mathrm{mod} \, m}\]
for all $k,l$.
Moreover
\begin{itemize}[label=$\bullet$]
\item if $a=0$, then $\Gf \cong \left( \prod_{k=1}^{c} \Grpf{\func|_{\Yman_{k,0}}} \right) \wr \bZ_{m}$;
\item otherwise $a=1$ and $\Gf \cong \Grpf{\func|_{\Xman_{1}}} \times  \left( \Bigl( \prod_{k=1}^{c} \Grpf{\func|_{\Yman_{k,0}}}\Bigr) \wr \bZ_{m}\right)$.
\myqed
\end{itemize}
\end{enumerate}
\end{mylemma}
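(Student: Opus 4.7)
The plan is first to establish that the regular neighborhood $\Rman$ is invariant under every $\dif\in\StabilizerIsotId{\func}$, and then to analyze how such $\dif$ acts on the connected components of $\overline{\Mman\setminus\Rman}$. For invariance, note that $u\in\KRGraphf$ corresponds to the boundary circle $\gamma\subset\partial\Mman$, which is preserved by every diffeomorphism isotopic to $\id_{\Mman}$; since $(u,v)$ is the unique edge at $u$, both $v$ and the germ of $(u,v)$ at $v$ are fixed by $\rho(\dif)$, so $\dif$ preserves $\Kman=p^{-1}(v)$ and hence the level-based neighborhood $\Rman$ built symmetrically around $\Kman$. Consequently $\dif$ permutes the components of $\overline{\Mman\setminus\Rman}$; by definition the $\Xman_i$ are the components individually invariant and the $\Yman_j$ form the nontrivial orbits.

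Part~\ref{enum:lm:Gf_on_disk_cyl:b0} is the easier case. Each $\dif\in\StabilizerIsotId{\func}$ restricts to $\dif|_{\Xman_i}\in\Stabilizer{\func|_{\Xman_i}}$, and since $\Xman_i$ is a disk or cylinder whose boundary components are collared level curves of $\func$, one checks that $\dif|_{\Xman_i}$ is isotopic to $\id_{\Xman_i}$ inside $\Diff(\Xman_i)$. This yields a homomorphism $\Phi:\Gf\to\prod_{i=0}^{a}\Grpf{\func|_{\Xman_i}}$. Surjectivity follows by a gluing construction: given $\dif_i\in\StabilizerIsotId{\func|_{\Xman_i}}$, extend each by the identity on $\Rman$ and on the other $\Xman_j$ to obtain an element of $\StabilizerIsotId{\func}$ realising the prescribed image. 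Injectivity reduces to the observation that the edges of $\KRGraphf$ incident to $v$ correspond bijectively to the components of $\partial\Rman$, and each of these bounds a distinct $\Xman_i$; so if $\rho(\dif)$ fixes every $\KRGraph{\func|_{\Xman_i}}$ pointwise, then it fixes every edge of $\KRGraphf$ meeting $v$, and hence all of $\KRGraphf$.

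For Part~\ref{enum:lm:Gf_on_disk_cyl:b_pos} the key new input is a rigid ``synchronised cyclic shift'' structure. Since $\dif$ is orientation-preserving and fixes the component of $\partial\Rman$ facing $\gamma$, the induced permutation of the remaining components of $\partial\Rman$ is the restriction of a rotation of a small regular neighborhood of $\Kman$; orientability of $\Mman$ forces this rotation to be cyclic rather than dihedral. Because the same rotation acts on $\Kman$ as a single combinatorial object, it governs all orbits of non-fixed circles simultaneously, producing the rectangular array $\{\Yman_{k,l}\}$ together with the uniform rule $\dif(\Yman_{k,l})=\Yman_{k,l+n\bmod m}$. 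Moreover a nontrivial cyclic rotation of $\Kman$ fixes at most one circle of $\partial\Rman$ beyond the one facing $\gamma$, which forces $a\le1$. The group-theoretic decomposition then follows: the stabiliser of a single column contributes $\prod_{k=1}^{c}\Grpf{\func|_{\Yman_{k,0}}}$ by the argument of Part~\ref{enum:lm:Gf_on_disk_cyl:b0}, the cyclic shift contributes the top group $\bZ_m$, and a possible independent $\Xman_1$ factor supplies the extra $\Grpf{\func|_{\Xman_1}}$.

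The main obstacle is precisely this synchronised cyclic shift claim together with the bound $a\le1$: they require a careful combinatorial analysis of a regular neighborhood of a critical level component of a Morse function, combined with orientability of $\Mman$, and it is here that being isotopic to the identity (not merely preserving $\func$) enters in an essential way. Surjectivity of the wreath-product description then reduces, as in Part~\ref{enum:lm:Gf_on_disk_cyl:b0}, to a gluing argument realising the prescribed cyclic shift by a diffeomorphism of $\Rman$ supported near $\Kman$.
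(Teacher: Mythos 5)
You should first be aware that the paper contains no proof of this lemma: it is quoted verbatim from \cite[Theorem~5.7]{Maksymenko:DefFuncI:2014} and used as a black box (note the citation in the statement and the absence of any proof environment), so there is no internal argument to compare yours against. Judged on its own merits, your sketch has the right architecture — a restriction homomorphism to the product of the groups of the pieces, surjectivity by gluing, and a cyclic action on the atom $\Rman$ producing the wreath product — but the two steps you yourself flag as ``the main obstacle'' are exactly where the proposal asserts rather than proves, and the heuristics you offer for them are not the correct mechanisms.

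Concretely: (i) ``orientability forces the rotation to be cyclic rather than dihedral'' does not yield the synchronised cyclic shift. The group of $\func$-preserving symmetries of the atom $(\Rman,\Kman)$ can a priori be a rather arbitrary finite group; what makes the relevant image cyclic is that every $\dif$ under consideration preserves, together with its orientation, the distinguished boundary circle of $\Rman$ facing $\gamma$, and an orientation-preserving automorphism of a connected ribbon graph acts freely on darts, so the stabilizer of a boundary face embeds into the cyclic group of rotations of that face's boundary walk. Even granting cyclicity of order $m$, you must still show that every non-invariant boundary circle lies in a \emph{free} $\bZ_m$-orbit, i.e.\ that all rows of~\eqref{equ:renumbering_Yij} have length exactly $m$ and not a proper divisor of $m$; this is essential because $A\wr\bZ_m$ here is the regular wreath product $\Map(\bZ_m,A)\rtimes\bZ_m$, and your sketch is silent on it. (ii) The bound $a\le 1$ does not follow from orientability alone: it uses that $\Mman$ is planar ($D^2$ or $S^1\times[0,1]$), so that a finite-order orientation-preserving homeomorphism realizing the generator fixes at most two boundary circles of $\Rman$ (cap off and use the classification of finite cyclic actions on genus-zero surfaces / Riemann--Hurwitz), one of which is already the circle facing $\gamma$; for atoms of higher genus the claim is false, so the hypothesis $\Mman=D^2$ or $S^1\times[0,1]$ must enter explicitly here. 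Finally, ``extend each $\dif_i$ by the identity'' requires first deforming $\dif_i$ within $\Stabilizer{\func|_{\Xman_i}}$ to be the identity near $\partial\Xman_i$, and then checking that the glued diffeomorphism is isotopic to $\id_{\Mman}$ — true for planar $\Mman$ because the ambiguity is by Dehn twists along curves that are inessential or boundary-parallel, but again a point where the restriction on $\Mman$ is used. None of this suggests your approach is wrong — these are precisely the contents of the cited theorem — but as written the proposal records where the difficulty lies rather than resolving it.
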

\begin{figure}[ht]
\begin{tabular}{ccc}
\includegraphics[height=2.3cm]{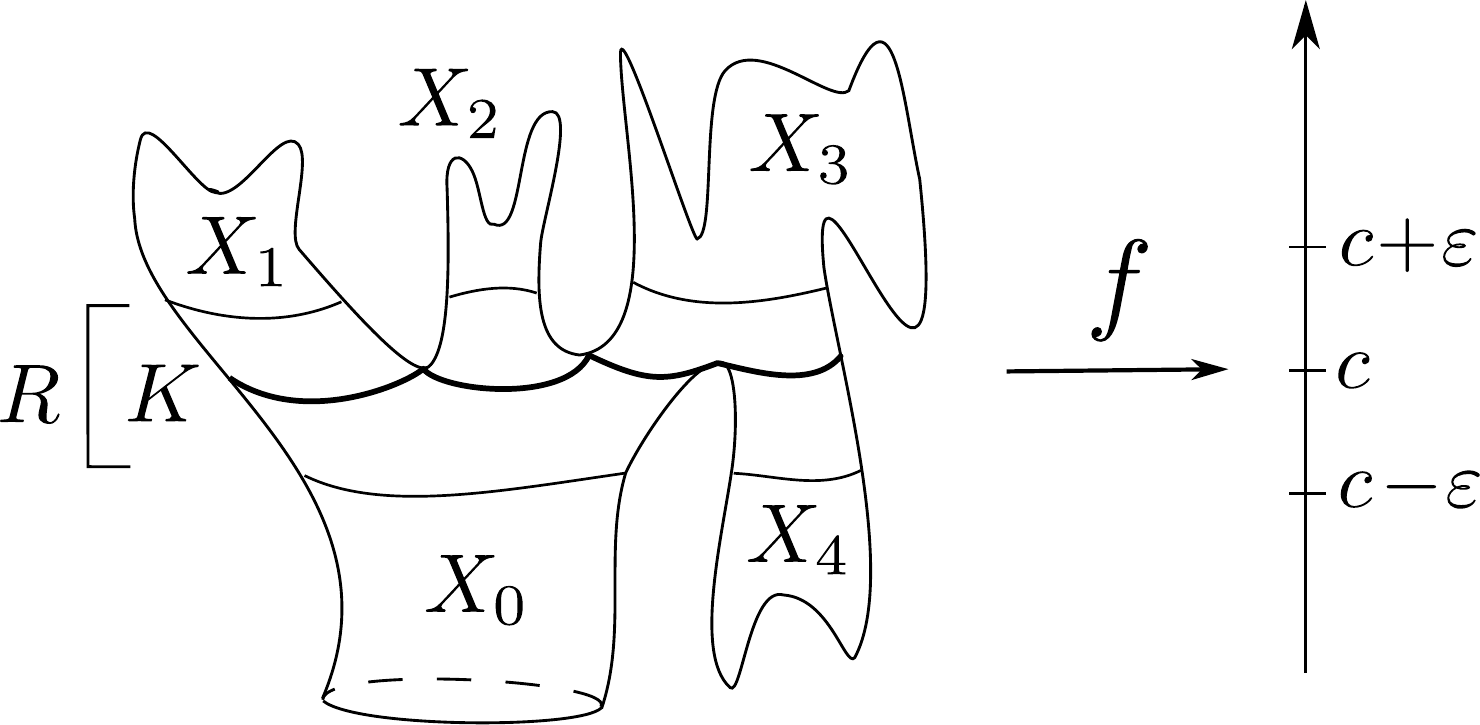} & \qquad\qquad &
\includegraphics[height=2.3cm]{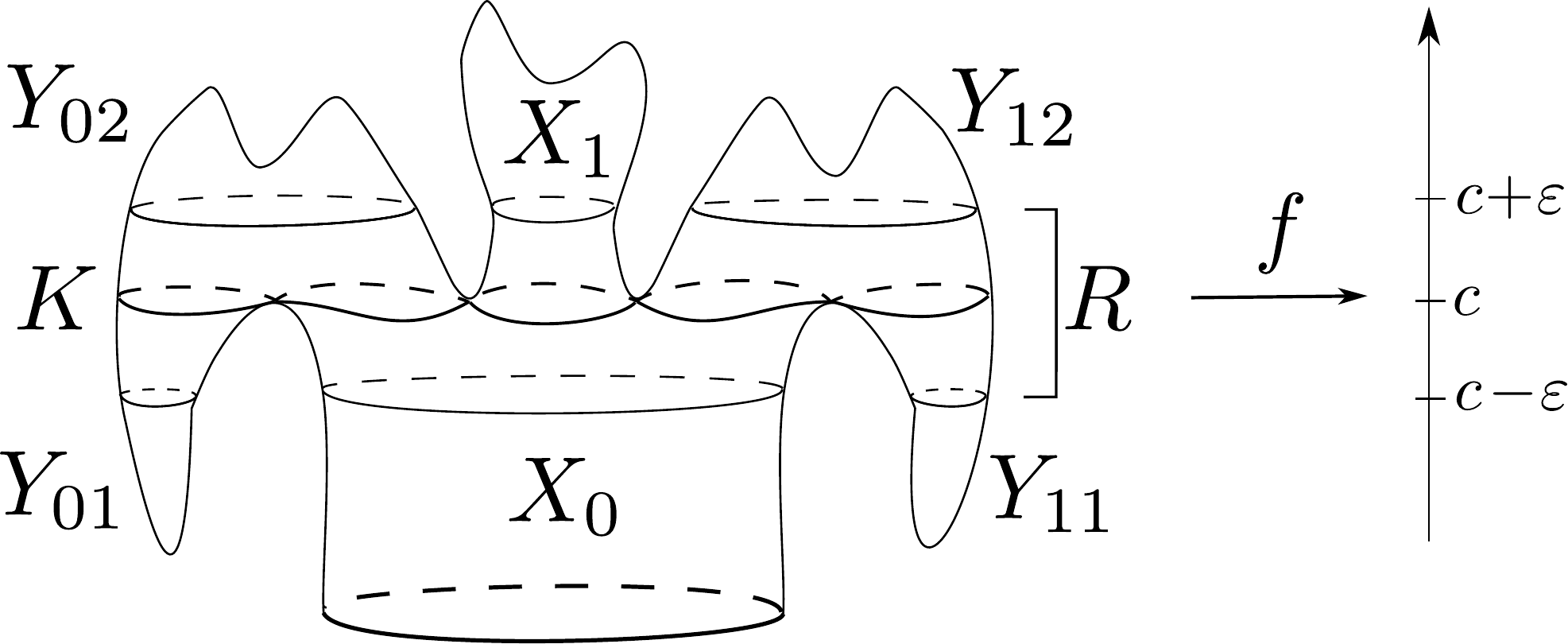} \\[2mm]
Case (1) && Case (2)
\end{tabular}
\caption{}\label{fig:regnbh}
\end{figure}

Since the family $\PClassAll$ is closed with respect to direct product and wreath products with finite cyclic groups from the top, we get from Lemma~\ref{lm:Gf_on_disk_cyl} that $\Gf\in\PClassAll$.

Assume now that $\func$ is simple.
Then $\overline{\Mman\setminus\Rman}$ consists of three connected components $\Xman_0$, $\Zman_1$, $\Zman_2$, such that $\Xman_0$ is a cylinder containing $\partial D^2$, and $\Zman_1$, $\Zman_2$ are discs, see Figure~\ref{fig:AwrZ2:details}.
Moreover, $\func|_{\Zman_1}$ and $\func|_{\Zman_2}$ are simple Morse maps and therefore, by induction, $\Grpf{\func|_{\Zman_1}}$, $\Grpf{\func|_{\Zman_2}}\in\PClassTwo$.

\begin{figure}[ht]
\centerline{\includegraphics[height=2.3cm]{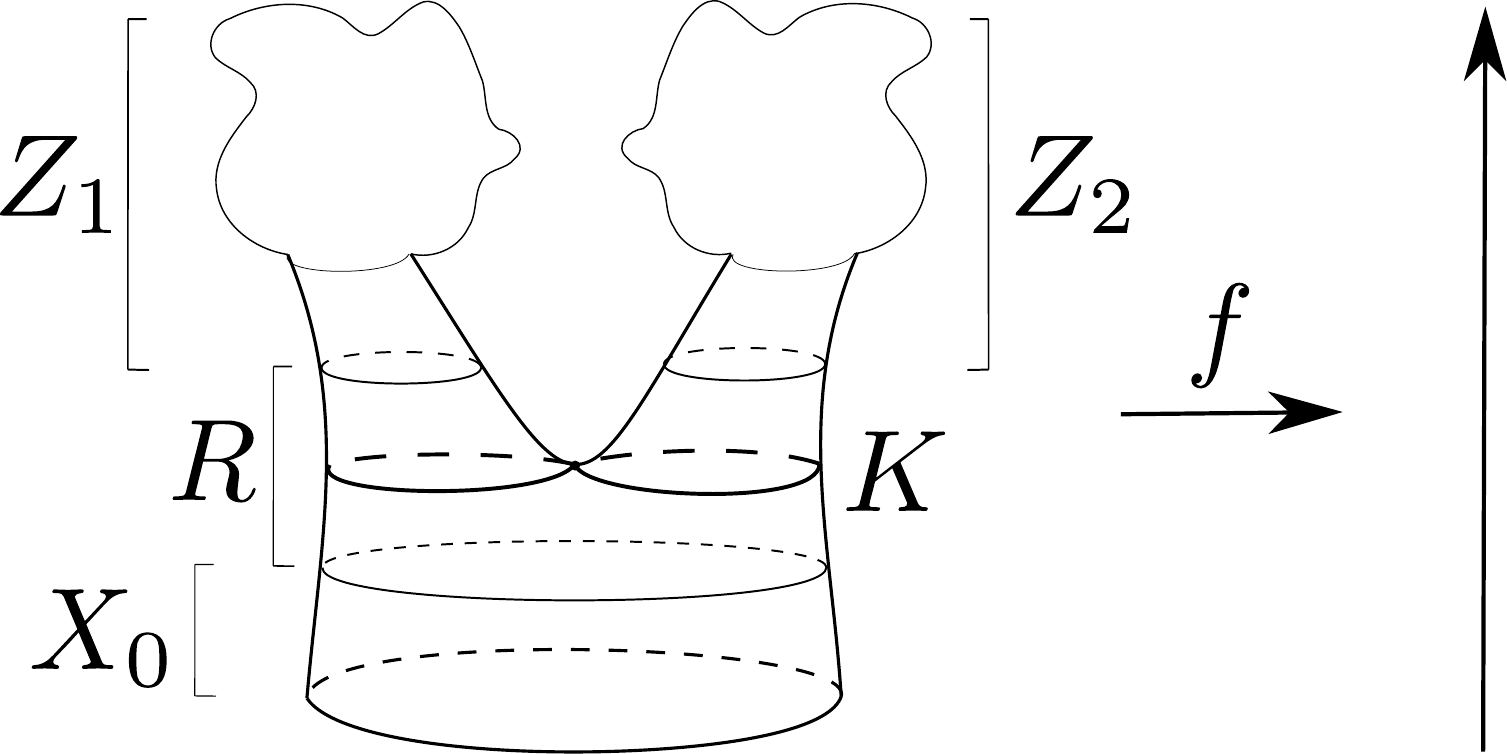}}
\caption{}\label{fig:AwrZ2:details}
\end{figure}

If $\Zman_1$ and $\Zman_2$ are invariant with respect to $\StabilizerIsotId{\func}$, then by the case~\ref{enum:lm:Gf_on_disk_cyl:b0} of Lemma~\ref{lm:Gf_on_disk_cyl}, \[\Gf \cong \Grpf{\func|_{\Zman_1}} \times \Grpf{\func|_{\Zman_2}} \ \in \ \PClassTwo.\]

Otherwise, there exists $\dif\in\Stabilizer{\func}$ such that $\dif(\Zman_1)=\Zman_2$.
Therefore, we have the case~\ref{enum:lm:Gf_on_disk_cyl:b_pos} of Lemma~\ref{lm:Gf_on_disk_cyl}.
Moreover, the numbering in~\eqref{equ:renumbering_Yij} can be done as follows:
\begin{align*}
\Yman_{1,0} &= \Zman_1, &
\Yman_{1,1} &= \Zman_2,
\end{align*}
that is $c=1$, $m=2$.
Therefore $\Gf \cong \Grpf{\func|_{\Yman_{1,0}}} \wr \bZ_{2} \in \PClassTwo$.
Theorem~\ref{th:ClassesGf} is completed.
\myqed

\section{Generalizations}
In fact, in~\cite{Maksymenko:DefFuncI:2014} it was studied the set $\mathcal{F}(\Mman,\Pman)$ of smooth maps $\func:\Mman\to\Pman$ taking constant values at connected components of $\partial\Mman$ and being smoothly equivalent to \myemph{homogeneous polynomials without multiple factors} in neighborhoods of their critical points.
As the polynomials $\pm x^2\pm y^2$ are homogeneous and have no multiple factors, it follows from Morse lemma that $\Morse{\Mman}{\Pman}\subset\mathcal{F}(\Mman,\Pman)$.
We will sketch here a variant of Theorem~\ref{th:ClassesGf} for the maps from $\mathcal{F}(\Mman,\Pman)$.

One could check that every $\func\in\mathcal{F}(\Mman,\Pman)$ has only isolated critical points, and it is possible to define its graph $\KRGraphf$ and the homomorphism $\rho:\Stabilizer{\func}\to\Aut(\KRGraphf)$ similarly to~\S\ref{sect:preliminaries}.

Let also $\FolStabilizer{\func}$ be the subgroup of $\ker(\rho)$ consisting of diffeomorphisms $\dif$ such that for every \myemph{degenerate local extreme} $z$ of $\func$ the induced tangent map $T_z\dif: T_z\Mman\to T_z\Mman$ is the identity.
Denote $\FolStabilizerIsotId{\func} = \FolStabilizer{\func} \cap\DiffIdM$ and put
\begin{equation}\label{equ:Gf_1}
\Gf =  \StabilizerIsotId{\func} /\FolStabilizerIsotId{\func}.
\end{equation}

Notice that if $\func\in\mathcal{F}(\Mman,\Pman)$ has only non-degenerate local extremes, e.g. it is Morse, then $\FolStabilizer{\func} = \ker(\rho)$.
Hence 
\begin{align*}
\StabilizerIsotId{\func} \cap \ker(\rho) &=
\Stabilizer{\func}\cap  \DiffIdM \cap \ker(\rho) =
%\DiffIdM \cap \ker(\rho) = \\ &=
 \DiffIdM \cap\FolStabilizer{\func} =
\FolStabilizerIsotId{\func},
\end{align*}
and therefore
\begin{align*}
\rho(\StabilizerIsotId{\func}) &\cong 
\frac{\StabilizerIsotId{\func}}{\StabilizerIsotId{\func} \cap \ker(\rho)} =
\frac{\StabilizerIsotId{\func}}{\FolStabilizerIsotId{\func}}
\cong \Gf,
\end{align*}
so in this case the definitions~\eqref{equ:Gf} and~\eqref{equ:Gf_1} agree, and $\Gf$ can be regarded as a subgroup of the group of automorphisms of the graph $\KRGraphf$ of $\func$.

However, for maps $\func\in\mathcal{F}(\Mman,\Pman)$ having degenerate local extremes $\FolStabilizer{\func}$ is a proper subgroup of $\ker(\rho)$.
Nevertheless, Lemmas~\ref{lm:Gf_prod_decomp} and~\ref{lm:Gf_on_disk_cyl} are proved in~\cite{Maksymenko:DefFuncI:2014} for all $\func\in\mathcal{F}(\Mman,\Pman)$ if one define $\Gf$ by~\eqref{equ:Gf_1}.

\begin{mytheorem}
Let $\Mman$ be a compact surface distinct from $S^2$ and $T^2$ and 
\[\mathbf{G}_{\mathcal{F}}(\Mman,\Pman) = \{ \StabilizerIsotId{\func} /\FolStabilizerIsotId{\func} \mid \func\in\mathcal{F}(\Mman,\Pman)\}\]
be the family of isomorphism classes of group $\Gf$ defined by~\eqref{equ:Gf_1}, where $\func$ run over all $\mathcal{F}(\Mman,\Pman)$.
Then $\mathbf{G}_{\mathcal{F}}(\Mman,\Pman) = \PClassAll$.
\end{mytheorem}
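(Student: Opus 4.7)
The plan is to prove the two inclusions separately, adapting the proof of Theorem~\ref{th:ClassesGf} and exploiting the fact, emphasized in the passage above, that Lemmas~\ref{lm:Gf_prod_decomp} and~\ref{lm:Gf_on_disk_cyl} remain valid for all $\func \in \mathcal{F}(\Mman,\Pman)$ when $\Gf$ is defined by~\eqref{equ:Gf_1}.

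For the inclusion $\PClassAll \subseteq \mathbf{G}_{\mathcal{F}}(\Mman,\Pman)$, it suffices to invoke Theorem~\ref{th:ClassesGf}: since $\Morse{\Mman}{\Pman}\subseteq \mathcal{F}(\Mman,\Pman)$ and the definitions~\eqref{equ:Gf} and~\eqref{equ:Gf_1} agree on Morse maps (as observed just after~\eqref{equ:Gf_1}), every group arising as $\Gf$ for some Morse $\func$ also belongs to $\mathbf{G}_{\mathcal{F}}(\Mman,\Pman)$, and hence $\PClassAll = \ClassGfM \subseteq \mathbf{G}_{\mathcal{F}}(\Mman,\Pman)$.

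For the reverse inclusion $\mathbf{G}_{\mathcal{F}}(\Mman,\Pman) \subseteq \PClassAll$, I would reproduce the second half of the proof of Theorem~\ref{th:ClassesGf} essentially verbatim. First, Lemma~\ref{lm:Gf_prod_decomp} decomposes $\Gf$ as a finite direct product of groups of the form $\Grpf{\func|_{\Mman_i}}$ with each $\Mman_i$ a disk or a cylinder, so closure of $\PClassAll$ under direct products reduces the problem to $\Mman = D^2$ or $\Mman = S^1 \times [0,1]$. On these two surfaces, I would induct on the number of critical components of level sets of $\func$ that are not local extremes, i.e., the number of interior vertices of $\KRGraphf$ of valence at least two. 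In the inductive step, following the Morse proof, one picks a vertex of $\KRGraphf$ adjacent to the boundary vertex, takes a $\StabilizerIsotId{\func}$-invariant regular neighborhood $\Rman$ of the corresponding critical component $\Kman$, and applies Lemma~\ref{lm:Gf_on_disk_cyl} to express $\Gf$ either as a direct product or as a direct product combined with a wreath product by some $\bZ_m$ of the groups for the restrictions of $\func$ to the connected components of $\overline{\Mman \setminus \Rman}$. Each such restriction has strictly smaller complexity, so by induction its group lies in $\PClassAll$, which is closed under direct products and wreath products with arbitrary $\bZ_m$ from the top.

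The main obstacle I anticipate is the base case, which is genuinely richer than in the Morse setting. When $\func$ has only local-extreme critical components, $\rho(\StabilizerIsotId{\func})$ is trivial, but the quotient $\Gf = \StabilizerIsotId{\func}/\FolStabilizerIsotId{\func}$ need not be: degenerate local extremes can contribute nontrivially, since $\FolStabilizer{\func}$ imposes the extra requirement that the tangent map at every such extreme be the identity. To handle this one needs to show that at every degenerate local extreme $z$ the induced tangent action of $\StabilizerIsotId{\func}$ lands in the finite cyclic group of rotational symmetries of the homogeneous-polynomial model of $\func$ at $z$, so that the contributions from the finitely many degenerate extremes combine into a finite direct product of cyclic groups lying in $\PClassAll$. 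Once this base case is secured, the induction proceeds exactly as in the proof of Theorem~\ref{th:ClassesGf} and yields $\Gf \in \PClassAll$, completing the argument.
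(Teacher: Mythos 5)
Your proposal follows essentially the same route as the paper's (sketched) proof: the forward inclusion via Theorem~\ref{th:ClassesGf} together with $\Morse{\Mman}{\Pman}\subset\mathcal{F}(\Mman,\Pman)$ and the agreement of~\eqref{equ:Gf} with~\eqref{equ:Gf_1} on Morse maps, and the reverse inclusion by rerunning the induction through Lemmas~\ref{lm:Gf_prod_decomp} and~\ref{lm:Gf_on_disk_cyl}, with the only genuinely new ingredient being the base case of a degenerate local extreme. For that base case you propose to show the tangent action lands in a finite cyclic rotation group of the homogeneous model; the paper instead cites \cite[Theorem~5.5]{Maksymenko:DefFuncI:2014}, which states precisely that a map in $\mathcal{F}(D^2,\Pman)$ with a unique critical point that is a degenerate local extreme has $\Gf\cong\bZ_{2n}\in\PClassAll$, so the two arguments coincide in substance.
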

{\em Sketch of proof.}
By Theorem~\ref{th:ClassesGf} $\PClassAll = \ClassGfM  \subset \mathbf{G}_{\mathcal{F}}(\Mman,\Pman)$.
The inverse inclusion can be proved similarly to $\ClassGfM \subset \PClassAll$ using 
Lemmas~\ref{lm:Gf_prod_decomp} and~\ref{lm:Gf_on_disk_cyl} for $\mathcal{F}(\Mman,\Pman)$, and~\cite[Theorem~5.5]{Maksymenko:DefFuncI:2014} claiming that if $\func\in\mathcal{F}(D^2,\Pman)$ has a unique critical point and that point is a \myemph{degenerate local extreme}, then $\Gf \cong \bZ_{2n}$ for some $n\in\bN$.
We leave the details for the reader.
\myqed

%\bibliographystyle{plain}
%\bibliography{biblio}

\def\cprime{$'$}

\end{document}